\newtheorem{conjecture}{Conjecture}
\newtheorem{theorem}{Theorem}
\newtheorem{lemma}{Lemma}
\begin{document}

\title{Moments of zeta and correlations of divisor-sums: IV}
 
\author{Brian Conrey}
\address{American Institute of Mathematics, 600 E. Brokaw Rd., San Jose, CA 95112, USA and School of Mathematics, University of Bristol, Bristol BS8 1TW, UK}
\email{conrey@aimath.org}
\author{Jonathan P. Keating}
\address{School of Mathematics, University of Bristol, Bristol BS8 1TW, UK}
\email{j.p.keating@bristol.ac.uk}

\thanks{We gratefully acknowledge support under EPSRC Programme Grant EP/K034383/1
LMF: L-Functions and Modular Forms.  Research of the first author was also supported 
by the American Institute of Mathematics and by a grant from the National Science 
Foundation. JPK is grateful for the following additional support: a grant from the 
Leverhulme Trust, a Royal Society Wolfson Research 
Merit Award, a Royal Society Leverhulme Senior Research Fellowship, and a grant from the Air Force Office of Scientific Research, Air Force Material Command, 
USAF (number FA8655-10-1-3088). He is also pleased to thank the American Institute of Mathematics for hospitality during a visit where this work started.}

\date{\today}

\begin{abstract} In this series 
we examine the calculation of the $2k$th  moment and shifted moments of the Riemann 
zeta-function on the critical line using long Dirichlet 
polynomials and divisor correlations.   The present paper begins the general study 
of what we call Type II sums which utilize a circle method framework 
and a convolution of shifted convolution sums
  to obtain all of the lower order terms in the asymptotic 
formula for the mean square along
 $[T,2T]$ of a Dirichlet polynomial of length 
up to $T^3$ with divisor functions as coefficients.
\end{abstract}

\maketitle

\section{Introduction}
This paper is part 4 of a sequence of papers devoted to understanding how to conjecture all of 
the integral moments of the Riemann zeta-function from a number theoretic perspective. The method is to approximate
$\zeta(s)^k$ by a long Dirichlet polynomial and then compute the mean square of the Dirichlet polynomial (c.f.~[GG]). 
There will be many off-diagonal terms and it is the care of these that is the concern of these papers. In
particular it is necessary to treat the off-diagonal terms by a method invented by Bogomolny and Keating [BK1, BK2]. 
Our perspective on
this method is that it is most properly viewed as a multi-dimensional Hardy-Littlewood circle method. 

In part 3 [CK3] we considered the type I off diagonal terms from a general perspective. 
Now we look at the simplest type II sums.

The  formula we obtain is in complete agreement with all of the main terms  
predicted by the recipe of [CFKRS] (and in particular, with the leading order term conjectured in [KS]). 
 
\section{Shifted moments}
We are interested  in developing a number theoretic approach  
to the moments of the Riemann zeta-function on the critical line, in particular to the general ``shifted'' moment
given by
\begin{eqnarray}
\label{eqn:moment} 
I^\psi_{A,B}(T)=\int_0^\infty \psi\left(\frac t T \right) 
 \prod_{\alpha\in A}\zeta(s+\alpha)\prod_{\beta\in B} \zeta(1-s+\beta) ~dt
\end{eqnarray}
where $\psi$ is a smooth function with compact support, say $\psi \in C^\infty[1,2]$ and $s=1/2+it$
and $A$ and $B$ are sets of small complex numbers, referred to as the shifts. 
It is useful to consider as well the general shifted moment of a long Dirichlet polynomial. To express this
we first introduce the generalized divisor function $\tau_A(n)$ by way  of its generating function: 
$$\prod_{\alpha\in A} \zeta(s+\alpha)=\sum_{n=1}^\infty \frac{\tau_A(n)}{n^s}=:D_A(s).$$
Then we let 
$$\mathcal D_A(s;X)=\sum_{n\le X} \frac{\tau_A(n)}{n^s}$$
and consider
\begin{eqnarray} \label{eqn:basicI}  \nonumber
I^\psi_{A,B}(T;X):&=&\int_0^\infty \psi\left(\frac t T\right) D_{A}(s;X)D_{B}(1-s;X) ~dt\\
&=& T\sum_{m,n\le X }\frac{\tau_A(m)\tau_B(n){\hat \psi}\left(\frac{T}{2\pi}\log \frac m n \right)}{\sqrt{mn}}.
\end{eqnarray}

The recipe [CFKRS] tells us how to predict the behaviour of these moments. Firstly, we conjecture that 
$$ I^\psi_{A,B}(T)=T\int_0^\infty \psi(t)\sum_{U\subset A, V\subset B\atop |U|=|V|}
 \left(\frac{tT}{2\pi}\right)^{-\sum_{{\alpha}\in U\atop
{\beta}\in V}({\alpha}+{\beta}) }\mathcal B(A-U+V^-,B-V+U^-)~dt+o(T)
$$
where $\mathcal B$ is given by 
$$\mathcal B(A,B)=\sum_{n=1}^\infty\frac {\tau_A(n)\tau_B(n)}{n}$$
in the case that this series converges (for example if $\Re \alpha, \Re \beta>0$ for all $\alpha\in A$ and
$\beta \in B$) and is given by analytic continuation otherwise.
An alternate expression is $\mathcal B(A,B)=\mathcal A(A,B)\mathcal Z(A,B)$ 
where
$$Z(A,B):=\prod_{\alpha\in A \atop \beta\in B} \zeta(1+\alpha+\beta)$$
and $\mathcal A (A,B)$ is a product over primes that converges nicely
in the domains under consideration (see below).  We have used an unconventional notation here; 
 by $A-U+V^-$ we mean the following: start with the set $A$ and remove the elements of $U$ and then include the negatives of 
the elements of $V$. We think of the process as ``swapping" equal numbers of elements between  $A$ and $B$; when elements are removed from $A$
and put into $B$ they first get multiplied by $-1$. We keep track of these swaps with our equal-sized subsets $U$ and $V$  of $A$ and $B$;
and when we refer to the ``number of swaps'' in a term we mean the cardinality $|U|$ of $U$ (or, since they are of equal size, of $V$). 

The Euler product $\mathcal A$ is given by
 \begin{eqnarray*}\label{eq:Azeta}
  \mathcal{A} (A,B)=\prod_p Z_{p}(A,B)\int_0^1\mathcal{A}_{p,\theta}(A,B)~d\theta,
  \end{eqnarray*}
  where
$z_p(x):=(1-p^{-x})^{-1}$, $Z_p(A,B)=\prod_{\alpha\in
A\atop\beta\in B} z_p(1+\alpha+\beta)^{-1}$  
  and
\begin{eqnarray*}
\label{eq:Aptheta} \mathcal{A}_{p,\theta}(A,B):=  
\prod_{\alpha\in A} z_{p,-\theta}(\frac 12 +\alpha)
\prod_{\beta\in B}z_{p,\theta}(\frac 12 +\beta)
\end{eqnarray*}
with $z_{p,\theta}(x):=(1-e(\theta)p^{-x})^{-1}$.

The technique we are developing in the present series of papers is to approach our  moment problem (\ref{eqn:moment}) 
through the moments $I^\psi_{A,B}(T;X)$ of long Dirichlet polynomials 
for various ranges of $X$. The recipe of [CFKRS] also leads to  a conjectural  formula for $I^\psi_{A,B}(T;X)$. 
To explain this we begin with Perron's formula  
$$ D_{A}(s;X)=\frac{1}{2\pi i}
\int_{w} \frac{X^{w}}{w}D_{A_w}(s)~dw
$$
where we use the convenient notation 
$$A_w=\{\alpha+w:\alpha\in A\}.$$
Thus, we have
$$I^\psi_{A,B}(T;X)= \frac{1}{(2\pi i)^2}
\iint_{z,w}\frac{X^{z+w}}{zw} I^\psi_{A_{w},B_{z}}(T)~dw~dz.
$$
 We insert the conjecture above from the recipe and expect that
\begin{eqnarray*}
 I^\psi_{A,B}(T;X) &=&  T\int_0^\infty \psi(t)\frac{1}{(2\pi i)^2}
\iint_{z,w}\frac{X^{z+w}}{zw}\sum_{U\subset A, V\subset B\atop |U|=|V|}
 \left(\frac{tT}{2\pi}\right)^{-\sum_{{\alpha}\in U\atop
{\beta}\in V}({\alpha}+w+{\beta}+z) }\\
&&\qquad \qquad \times \mathcal B(A_w-U_w+V^-_z,B_z-V_z+U^-_w)~dw~dz~dt +o(T).
\end{eqnarray*}
We have done a little simplification in this expression: instead of writing $U\subset A_w$ we have written
$U\subset A$ and changed the exponent of $(tT/2\pi)$ accordingly.

Notice that there is a factor $(X/T^{|U|})^{w+z}$ here. As mentioned above we refer to  $|U|$ as the number of ``swaps" in the recipe, and now we see
more clearly the role it plays; in the terms above for which  $X<T^{|U|}$   we move the path of 
integration in $w$ or $z$ to $+\infty$ so that the factor  $(X/T^{|U|})^{w+z}\to 0$ and the contribution of such a term is 0.
Thus, the size of $X$ determines how many ``swaps'' we must keep track of. 

Our principal aim in this series of papers is to evaluate $I^\psi_{A,B}(T;X) $ directly using a conjecture for the correlations of $\tau_A(n)$ and then to 
compare with the above formula coming from the recipe of [CFKRS].  In [CK1] and [CK3] we considered the situation of 0 swaps which leads to the 
usual ``diagonal'' terms and 1 swap
which corresponds to the usual ``shifted divisor'' problem. 
 In [CK2] we considered a special case of 2 swaps. Now 
 we look at the general case of two swaps. This means that we are interested in the terms 
for which $X>T^2$ and for which 
$|U|=|V|=2$.

%************************

It is helpful to review the result of [CK3] before proceeding. The mathematical content of that paper is basically 
a conjecture and a theorem. 
First of all let $\epsilon>0$ be a small fixed number for this discussion and let $|\alpha|,|\beta|<\epsilon$ 
for all $\alpha\in A$ and $\beta\in B$.
The conjecture is about the analytic continuation of
$$\mathcal S_{A,B}(s,h):=\sum_{m=1}^\infty \frac{\tau_A(m)\tau_B(m+h)}{m^s}$$
and the sum of the residues near 1 of this:
$$\mathcal R_{A,B}(y;h):=\sum \operatornamewithlimits{Res}_{|s-1|<\epsilon}
S_{A,B}(s,h) y^{s-1}$$
where  we intend this notation to mean that $\mathcal R_{A,B}(y;h)$ is the sum of the residues of $S_{A,B}(s,h) y^{s-1}$ over all of the poles in $|s-1|<\epsilon$. 
  Let
$$ D_A(s,e(\frac 1q)):=\sum_{m=1}^\infty\frac{\tau_A(m)e(\frac m q) }{m^s}$$
and 
$$\mathcal R_A(y,q)=\sum \operatornamewithlimits{Res}_{|s-1|<\epsilon}
D_A(s,e(\frac 1q))y^{s-1} 
$$
be the sum of the residues near $s=1$, i.e.  including poles at $s=1-\alpha$ for $\alpha\in A$.
Let
$$\mathcal R_{A,B}^*(y;h):= \sum_{q=1}^\infty  r_q(h)  \mathcal R_A(y,q)\mathcal R_B(y,q)
$$
where $r_q(h)$ is the Ramanujan sum.
\begin{conjecture}
We conjecture for each fixed $h>0$ that $S_{A,B}(s,h)$ has a meromorphic continuation to $\Re s>\frac 12+\epsilon $ with all poles 
only in the region $|s-1|<\epsilon$ and that  
$$ \mathcal R_{A,B}(y;h)=\mathcal R_{A,B}^*(y;h).$$
\end{conjecture}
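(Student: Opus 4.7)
My approach would be a multi-dimensional Hardy--Littlewood / delta-method attack, in the spirit of the Bogomolny--Keating framework used throughout this series. The idea is to detect the condition $n=m+h$ by an additive character, write $\mathcal S_{A,B}(s,h)$ (formally) as
\begin{equation*}
\int_0^1 \Bigl(\sum_{m}\frac{\tau_A(m)e(-m\theta)}{m^s}\Bigr)\Bigl(\sum_{n}\tau_B(n)e(n\theta)\Bigr)e(-h\theta)\,d\theta,
\end{equation*}
render this rigorous via the smoothed $\delta$-symbol (or contour shifts as in [CK3]), and then decompose $[0,1]$ into Farey arcs around rationals $a/q$ with $(a,q)=1$. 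On each arc one recognises the twisted Dirichlet series $D_A(s,e(-a/q))$ and $D_B(s,e(a/q))$; summing the phase $e(-h\theta)$ over $a\pmod q$ with $(a,q)=1$ produces exactly the Ramanujan sum $r_q(h)$, which is the weight appearing in $\mathcal R_{A,B}^*(y;h)$.

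The next step is to understand the polar structure of the twisted series $D_A(s,e(a/q))$. By a standard factorisation through the Hurwitz zeta functions, one has a representation $D_A(s,e(a/q)) = q^{-|A|s}\cdot (\text{local Gauss-sum factor in } a,q)\cdot D_A(s) + \text{(holomorphic in }\Re s>\tfrac12+\epsilon)$, so the poles near $s=1$ sit at $s=1-\alpha$ for $\alpha\in A$, and the sum of their residues against $y^{s-1}$ is precisely $\mathcal R_A(y,q)$; similarly on the $B$-side. Since the local $\eta$-integral on the Farey arc around $a/q$ decouples the $A$-factor from the $B$-factor (up to lower-order shifts in $s$ that do not affect the residues at $s=1$), collecting residues on both sides yields the product $\mathcal R_A(y,q)\mathcal R_B(y,q)$, and summing over Farey denominators $q$ weighted by $r_q(h)$ gives
\begin{equation*}
\mathcal R_{A,B}(y;h) \;=\; \sum_{q=1}^\infty r_q(h)\,\mathcal R_A(y,q)\,\mathcal R_B(y,q) \;=\; \mathcal R_{A,B}^*(y;h),
\end{equation*}
as claimed.

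The hard part, and the reason this is stated as a conjecture rather than a theorem, is establishing the actual meromorphic continuation to $\Re s > \tfrac12+\epsilon$ with no poles outside $|s-1|<\epsilon$: one must control the off-arc contributions and show that the interference between adjacent Farey arcs does not manufacture new singularities. For $|A|=|B|=1$ this is essentially Motohashi's theorem, which rests on Kuznetsov's formula and the spectral theory of $\mathrm{GL}(2)$ automorphic forms; already for $|A|=|B|=2$ the analogous statement is the additive divisor problem for $d_4$, which is out of reach of current technology. Thus the circle-method bookkeeping above makes the \emph{form} of the answer essentially forced and identifies $\mathcal R^*_{A,B}$ as the only candidate, but proving the analytic continuation itself in the general case would require a genuine advance beyond the methods available to us.
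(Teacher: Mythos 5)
The statement is a conjecture, and the paper offers no proof of it — only the remark that the pole structure and the identity $\mathcal R_{A,B}=\mathcal R^*_{A,B}$ are what one would predict from the $\delta$-method; your heuristic derivation via Farey arcs, Ramanujan sums $r_q(h)$, and the polar structure of $D_A(s,e(a/q))$ is exactly that motivation, and you correctly identify the meromorphic continuation itself as the genuinely open part. So your proposal matches the paper's own (non-rigorous) justification in both substance and status.
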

The above is essentially the obvious pole structure that  one would conjecture by using the $\delta$-method for example.

% Now we can describe the mathematical content of [CK3]. 
% \begin{theorem}
% For any sets $A$ and $B$ as above and any smooth compactly supported function $\psi$ we have 
% the identity 
% \begin{eqnarray*}
% \int_0^\infty \hat{\psi}(v)v^z \sum_{h }^\infty \frac{\mathcal R_{A,B}^*(v;h)}{h^z}
% \frac{dv}{v}
% = \int_0^\infty \psi(t)\sum_{\alpha\in A\atop \beta\in B} \left(\frac{t}{2\pi}\right)^{-\alpha-\beta-z}
% \mathcal B(A'\cup\{-\beta-z\},B'\cup\{-\alpha\})     ~dt 
% \end{eqnarray*}
% for $\Re z>1+\epsilon$.
% \end{theorem}

% We call this theorem the ``analytic version of the general shifted divisor sum.'' 
% In this paper we prove an identity that is an analogue of Theorem 1 but for a convolution of two 
% shifted divisor sums. 

% In the first sections of this paper we ``derive'' the fundamental identity (actually two identities)
% by heuristic considerations.
% The identities are sufficiently
% complicated that we find it convenient to recast them as an  
% equality of  certain power series. 
% The last few sections are devoted to the 
% rigorous proofs of the two identities.

Now we briefly describe the calculation of [CKIII]. 
We evaluate 
$$\sum_{m,n\le X\atop m\ne n} \frac{\tau_A(m)\tau_B(n)}{\sqrt{mn}}\hat\psi\left(\frac {T}{2\pi}\log \frac mn\right)$$
as
$$2\sum_{h>0}\int_T^X \langle \tau_A(m)\tau_B(m+h)\rangle_{m\sim u} \hat\psi\left(
\frac{Th}{2\pi u}\right)\frac {du}{u}$$
which we evaluate  
by differentiating
Perron's formula with respect to $u$ and
then moving the $s$-contour to the left
to give
$$2\sum_{h>0}\int_T^X\mathcal R_{A,B}(u,h)\hat\psi\left(
\frac{Th}{2\pi u}\right)\frac {du}{u}$$
We make the change of variable $v= \frac{Th}{2\pi u}$
and rewrite this as 
$$2\int_0^\infty \hat\psi(v)\sum_{h\le \frac{2 \pi Xv}{T}} \mathcal R_{A,B}\left(\frac{Th}{2\pi v},h\right)\frac{dv}{v}$$
At this point we replace $\mathcal R$ by $\mathcal R^*$ and have 
$$2\int_0^\infty \hat\psi(v)\sum_{h\le \frac{2 \pi Xv}{T}} \sum_{q=1}^\infty 
r_q(h)\mathcal R_{A}\left(\frac{Th}{2\pi v},q\right)
\mathcal R_{B}\left(\frac{Th}{2\pi v},q\right)\frac{dv}{v}.$$
Now
$$r_q(h)=\sum_{d\mid h\atop d\mid q} d\mu(q/d)$$
so, replacing $h$ by $hd$ and $q$ by $qd$ the above is 
$$2\int_0^\infty \hat\psi(v)\sum_{q=1}^\infty \mu(q)\sum_{hd\le \frac{2 \pi Xv}{T}} 
d~\mathcal R_{A}\left(\frac{Thd}{2\pi v},qd\right)
\mathcal R_{B}\left(\frac{Thd}{2\pi v},qd\right)\frac{dv}{v}.$$
Now we express this using Cauchy's theorem as 
\begin{eqnarray*}&&
2\int_0^\infty \hat\psi(v)\sum_{q=1}^\infty  
\frac{\mu(q)}{(2\pi i)^3} \iiint_{\Re s=2\atop{|w-1|<\epsilon\atop |z-1|<\epsilon}}X^s 
\sum_{h,d=1}^\infty\left(\frac{Thd}{2\pi v}\right)^{z+w-s-2}
d\\
&&\qquad \qquad \times ~\mathcal D_{A}\left(w,e(-\frac{1}{qd})\right)
\mathcal D_{B}\left(z,e(-\frac{1}{qd})\right)\frac{dv}{v}~dz ~dw ~\frac{ds}{s}.
\end{eqnarray*}
Now we replace the sum over $h$ by $\zeta(2+s-w-z)$ and the integral over $v$ by
$$\frac{\chi(w+z-s-1)}{2}\int_0^\infty \psi(t) t^{z+w-s-2} ~dt.$$
This leads to 
\begin{eqnarray*}\int_0^\infty \psi(t)\sum_{q=1}^\infty  
\frac{\mu(q)}{(2\pi i)^3} \iiint_{\Re s=2\atop{|w-1|<\epsilon\atop |z-1|<\epsilon}}X^s 
\zeta(w+z-s-1)
\sum_{d=1}^\infty d\left(\frac{Tdt}{2\pi }\right)^{z+w-s-2} \\ \times \mathcal D_{A}\left(w,e(-\frac{1}{qd})\right)
\mathcal D_{B}\left(z,e(-\frac{1}{qd})\right)~dt ~dz ~dw ~ds.
\end{eqnarray*}
Upon comparison with the recipe we have  the identity
 \begin{theorem}
\begin{eqnarray*}&&
\operatornamewithlimits{Res}_{w=1-\alpha\atop z=1-\beta}\sum_{q=1}^\infty  
 \mu(q) 
\sum_{d=1}^\infty d^{z+w-1} \zeta(w+z-1)
\mathcal D_{A}\left(w,e(-\frac{1}{qd})\right)
\mathcal D_{B}\left(z,e(-\frac{1}{qd})\right)  \\&&
\qquad = 
  \mathcal B(A'\cup\{-\beta\},B'\cup\{-\alpha\}) .
\end{eqnarray*}
\end{theorem}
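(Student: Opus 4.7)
Here I write $A' = A\setminus\{\alpha\}$ and $B' = B\setminus\{\beta\}$. The plan is to evaluate the residue on the left-hand side directly in three steps: first consolidate the double sum over $q$ and $d$ into a single sum over $n=qd$; next compute the residues of the twisted Dirichlet series $\mathcal D_A(w,e(-1/n))$ and $\mathcal D_B(z,e(-1/n))$ in multiplicative form; finally identify the resulting $n$-sum with the Euler product defining $\mathcal B(A'\cup\{-\beta\},B'\cup\{-\alpha\})$.

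The first step is routine. Writing $n=qd$, Dirichlet convolution gives
$$\sum_{d\mid n}\mu(n/d)\,d^{w+z-1} = n^{w+z-1}\prod_{p\mid n}(1-p^{1-w-z}),$$
which is multiplicative in $n$. Since $\zeta(w+z-1)$ evaluates regularly to $\zeta(1-\alpha-\beta)$ at $(w,z)=(1-\alpha,1-\beta)$, the residue operation acts only on the factor $\mathcal D_A(w,e(-1/n))\mathcal D_B(z,e(-1/n))$.

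The core of the argument is the second step. I would split the sum defining $\mathcal D_A(w,e(-1/n))$ according to $d=(m,n)$ and, on each residue class coprime to $n/d$, expand the additive character $e(-m/n)$ in Dirichlet characters modulo $n/d$ via Gauss sums. This expresses $\mathcal D_A(w,e(-1/n))$ as a finite linear combination of products $\prod_{\alpha'\in A} L(w+\alpha',\chi)$ with $\chi$ running over characters modulo divisors of $n$. The pole at $w=1-\alpha$ comes only from terms in which $L(w+\alpha,\chi)$ reduces to a $\zeta$-factor, and the resulting residue is manifestly multiplicative in $n$. A parallel computation handles $\mathcal D_B(z,e(-1/n))$ at $z=1-\beta$. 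The product of these two residues together with the multiplicative weight from step~1 yields a Dirichlet series in $n$ that factors as an Euler product $\prod_p F_p$.

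Finally, I would verify prime-by-prime that each $F_p$ coincides with the local factor of $\mathcal B(A'\cup\{-\beta\},B'\cup\{-\alpha\})$, that is, with $Z_p\int_0^1\mathcal A_{p,\theta}\,d\theta$. The $\theta$-integral arises naturally as the constant-term extraction from a geometric series in $e(\theta)$ parameterized by the $p$-adic valuation of $n$. The main obstacle is precisely this local identity: one must show that the sum over $k\ge 0$ of the local Gauss-sum residue data, weighted by $p^{k(w+z-1)}$ and by $(1-p^{1-w-z})$ when $k\ge 1$, collapses to the required closed form. This is delicate algebra in the variables $p^{-w-\alpha'}$ and $p^{-z-\beta'}$, but it can be reduced to a routine rational-function identity by expanding both sides as geometric series and matching coefficients; the two-swap analogue carried out in [CK2] supplies a close template for the calculation.
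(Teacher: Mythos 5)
Your overall route is the same as the paper's. The paper proves Theorem 1 by combining two imported facts: (a) the result of [CG] that the singular part of $\mathcal D_A(s,e(\frac aq))$, $(a,q)=1$, agrees with that of $q^{-s}\prod_{\alpha'\in A}\zeta(s+\alpha')G_A(s,q)$ with $G_A(s,\cdot)$ multiplicative, and (b) the arithmetic identity stated at the end of Section 3 of [CK3]. Your steps 1--2 amount to a sketch of (a) (the Gauss-sum/Dirichlet-character expansion is indeed how such singular parts are computed, and your consolidation $n=qd$ with weight $n^{w+z-1}\prod_{p\mid n}(1-p^{1-w-z})$ is correct), and your step 3 is exactly (b).

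Two things keep the proposal from being a proof. First, in step 2 the character decomposition does not express $\mathcal D_A(w,e(-1/n))$ as a finite linear combination of products $\prod_{\alpha'\in A}L(w+\alpha',\chi)$: since $\tau_A(dm')\neq\tau_A(d)\tau_A(m')$ when $(d,m')>1$, one gets such products multiplied by finite Euler-factor corrections at the primes dividing $n$, and these corrections are precisely the multiplicative data $G_A(1-\alpha,n)$, $G_B(1-\beta,n)$ that survive into the $n$-sum; they cannot be suppressed, because they are what the final Euler product is made of. Second, and decisively, the local identity of your step 3 is the entire content of the theorem and is not established. After inserting the residues and stripping the common factors $\zeta(1-\alpha-\beta)\prod_{\alpha'\in A'}\zeta(1+\alpha'-\alpha)\prod_{\beta'\in B'}\zeta(1+\beta'-\beta)$, what must be proved is
\[
\sum_{q,d\ge 1}\frac{\mu(q)}{q^{2-\alpha-\beta}\,d}\,G_A\Big(1-\alpha,\frac{qd}{1}\Big)G_B\Big(1-\beta,\frac{qd}{1}\Big)
=\mathcal A\big(A'\cup\{-\beta\},B'\cup\{-\alpha\}\big)\prod_{{\alpha'\in A'}\atop{\beta'\in B'}}\zeta(1+\alpha'+\beta'),
\]
including the emergence of the $\theta$-integral in $\mathcal A$. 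Declaring this a ``routine rational-function identity'' with [CK2] as a template is an assertion, not an argument: the paper imports exactly this statement from the identity at the end of Section 3 of [CK3] (not [CK2], which treats a special two-swap case), and the analogous local verifications carried out in the present paper (Sections 10--11, built on the telescoping identities for $A^+$ and the lemma on $\mathcal F$ and $\mathcal C$) occupy several pages. Until that prime-by-prime computation is actually performed, or correctly cited, the proof is incomplete at its crux.
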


Theorem 1 follows from the
identity stated at the end of Section 3 of
[CK3] and the fact that the singular part
of $\mathcal{D}_A(s,e(\frac{1}{q}))$ is
the same as $q^{-s}\prod_{\alpha\in
A}\zeta(s+\alpha)G_A(s,q)$, as proved in
[CG].  

We call this theorem the ``analytic version of the general shifted divisor sum.'' 
In this paper we prove an identity that is an analogue of Theorem 1 but for a convolution of two 
shifted divisor sums. This is a step forward in this process of understanding moments. The key theorem is a convolution identity
\begin{theorem}
\begin{eqnarray*}&&
\operatornamewithlimits{Res}_{{w_1=1-\alpha_1\atop z_1=1-\beta_1}\atop
{w_2=1-\alpha_2\atop z_2=1-\beta_2}}
\zeta(w_1+z_1-1)\zeta(w_2+z_2-1)
\sum_{(M,N)=1\atop {d_1,d_2\atop q_1,q_2}}\frac{ \mu(q_1)\mu(q_2)d_1^{z_1+w_1-1}
d_2^{z_2+w_2-1}}{M^{z_1+w_2-1}
N^{w_1+z_2-1}} 
  \\
&&\qquad
\mathcal D_{A_1}\left(w_1,e(-\frac{N}{q_1d_1})\right)\mathcal D_{A_2}\left(w_2,e(-\frac{M}{q_2d_2})\right)
\mathcal D_{B_1}\left(z_1,e(-\frac{M}{q_1d_1})\right) \mathcal D_{B_2}\left(z_2,e(-\frac{N}{q_2d_2})\right)  \\&&
\qquad = 
  \mathcal B(A''\cup\{-\beta_1,-\beta_2\},B''\cup\{-\alpha_1,-\alpha_2\}) .
\end{eqnarray*}
\end{theorem}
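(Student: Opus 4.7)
The plan is to imitate the strategy behind Theorem 1 but with two ``swap systems'' coupled through the pair $(M,N)$. As in [CG] and the proof of Theorem 1 recalled above, the starting point is the identification of the singular part of $\mathcal{D}_A(s,e(h/k))$ near $s=1-\alpha$ as $k^{-s}\prod_{\alpha\in A}\zeta(s+\alpha)\,G_A(s;h,k)$, where $G_A$ is a convergent Euler product. Substituting this into each of the four Dirichlet polynomial factors and then computing the four residues at $w_i=1-\alpha_i$ and $z_i=1-\beta_i$ reduces the left-hand side to an arithmetic sum over $M,N,d_1,d_2,q_1,q_2$ whose integrand consists of specific zeta values coming from the singular parts, the two explicit factors $\zeta(w_i+z_i-1)$ which become $\zeta(1-\alpha_i-\beta_i)$, and a product of four $G$-functions evaluated at the residue points.

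The crucial observation is that the right-hand side $\mathcal{B}(A''\cup\{-\beta_1,-\beta_2\},B''\cup\{-\alpha_1,-\alpha_2\})$, after the decomposition $\mathcal{B}=\mathcal{A}\mathcal{Z}$, contains the four ``cross'' zeta factors $\zeta(1-\alpha_i-\beta_j)$ for $i,j\in\{1,2\}$ coming from its $\mathcal{Z}$-part. The diagonal pair $\zeta(1-\alpha_i-\beta_i)$ is produced directly by the two explicit factors $\zeta(w_i+z_i-1)$ on residue. The off-diagonal pair $\zeta(1-\alpha_1-\beta_2)$ and $\zeta(1-\alpha_2-\beta_1)$ must come from the sums over $N$ and $M$ respectively, since on residue the prefactors $N^{-(w_1+z_2-1)}$ and $M^{-(z_1+w_2-1)}$ become $N^{\alpha_1+\beta_2-1}$ and $M^{\alpha_2+\beta_1-1}$, and summing these against the characters involving $N$ in $\mathcal{D}_{A_1}$ and $\mathcal{D}_{B_2}$, respectively $M$ in $\mathcal{D}_{B_1}$ and $\mathcal{D}_{A_2}$, is expected to produce precisely these zeta values together with their Euler-product corrections.

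The remainder of the proof consists of showing that, after the coprimality constraint $(M,N)=1$ is inverted via $\sum_{e\mid(M,N)}\mu(e)$, the joint sum over $M,N,q_1,q_2,d_1,d_2$ of the four $G$-function product assembles into an Euler product whose local factor at each prime $p$ matches the local factor of $\mathcal{A}(A''\cup\{-\beta_1,-\beta_2\},B''\cup\{-\alpha_1,-\alpha_2\})$. The coprimality condition means that at each prime exactly one of $p\mid M$, $p\mid N$, or $p\nmid MN$ occurs, which partitions the local computation into three cases; and the $\mu(q_1)\mu(q_2)$ factors restrict each $q_i$ to be squarefree, bounding the local multiplicities so that the local generating function is a rational function in $p^{-w_i}$ and $p^{-z_i}$.

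The principal obstacle is this final prime-by-prime identity. For Theorem 1 the analogous calculation involves only two $G$-factors and a single $(q,d)$ system and is handled by the identity from [CK3]. Here we must treat the four-fold $G$-factor product coupled to the $M,N$ variables, and verify that the resulting local generating function coincides, after integration in the $\theta$-variable of the recipe, with $\mathcal{A}_{p,\theta}$ evaluated on the shift sets enlarged by $\{-\beta_1,-\beta_2\}$ and $\{-\alpha_1,-\alpha_2\}$. The combinatorial bookkeeping of prime-power divisibilities among $M,N,q_id_i$, together with the interlocked numerators of the additive characters, is the main technical hurdle and is where the proof will require the most care.
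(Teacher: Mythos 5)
Your overall reduction is the right one, and it matches the paper's framing: replace each $\mathcal{D}_{A}(s,e(a/q))$ (with $(a,q)=1$) by its singular part $q^{-s}\prod_{\alpha\in A}\zeta(s+\alpha)G_A(s,q)$, take the four residues, and note that the diagonal factors $\zeta(1-\alpha_i-\beta_i)$ come from the explicit $\zeta(w_i+z_i-1)$ while the cross factors $\zeta(1-\alpha_1-\beta_2)$, $\zeta(1-\alpha_2-\beta_1)$ must be generated by the $N$- and $M$-sums. But what you then label ``the main technical hurdle'' --- the prime-by-prime identity equating the sum over $M,N,q_1,q_2,d_1,d_2$ of the four-fold $G$-product with the local factor of $\mathcal{B}$ on the enlarged sets --- is not a technical remainder to be checked with care; it \emph{is} the theorem. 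Your proposal stops exactly where the actual proof begins, and it supplies no mechanism for carrying out that verification, so as it stands there is a genuine gap.

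For comparison, the paper devotes two sections to precisely this step. Working locally at a prime $p$ (with $X=1/p$ and $A(n)=\tau_A(p^n)$), the coprimality $(M,N)=1$ becomes $\min(\mathrm{ord}_p M,\mathrm{ord}_p N)=0$, and the local sum is split as $S_L+S_R-S_0$ via $\sum_{\min(M,N)=0}f=\sum_M f(M,0)+\sum_N f(0,N)-f(0,0)$; the squarefree $q_i$ contribute only $q_i\in\{1,p\}$ locally, which produces telescoping sums; each of $S_L$ and $S_R$ is organized into nine terms of the form $\mathcal F(\cdot,\cdot;\cdot,\cdot)=\sum_{K,L,M}A(K)B(K+M)C(L)D(L+M)X^{K+L+M}$; and the crucial combinatorial input is the lemma $\mathcal F(a,A;b,B)+\mathcal F(A,a;B,b)=\mathcal C(A\star b,a\star B)+\mathcal C(a,A)\mathcal C(b,B)$, which, applied to the paired terms of $S_L$ and $S_R$, converts everything into $\mathcal C$-functions; the product terms cancel against $S_0$, and a final application of $(A\cup\{-\alpha\})(d-1)=X^{\alpha}\left((A\cup\{-\alpha\})(d)-A(d)\right)$ yields $(1-X^{1-\alpha_1-\beta_1})(1-X^{1-\alpha_2-\beta_2})\,\mathcal C$ of the enlarged sets, i.e.\ the local factor of the right-hand side. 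None of this structure (the $\min=0$ decomposition rather than your proposed M\"obius inversion over $e\mid(M,N)$, the telescoping, the $\mathcal F$/$\mathcal C$ lemma, the pairing of the two nine-term tables) appears in your outline, and your suggestion to match against $\mathcal A_{p,\theta}$ via the $\theta$-integral is a different, uncarried-out comparison; without some equivalent of these ingredients the local identity is not established and the argument is incomplete.
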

 
 Theorem 2 follows from Section 11
because if $(a,q)=1$ then the singular
part of $\mathcal{D}_A(s,e(\frac{a}{q}))$
is identical to that of
$q^{-s}\prod_{\alpha\in
A}\zeta(s+\alpha)G_A(s,q)$.

A particularly interesting feature of this theorem is the appearance of the sum over $M$ and $N$.
It is these parameters which prompt us to liken this calculation to a circle method calculation. Basically the $M$ and $N$ make their appearance because of a splitting of the equation 
$m_1m_2-n_1n_2=h$ into a pair of equations where $m_1/n_1\approx M/N\approx n_2/m_2$
which gives $Mm_1-Nn_1=h_1, Nm_2-Mn_2=h_2$. This is the fundamental new idea of the paper.

In the next section of this paper we present the basic set up, which involves a convolution of two shifted divisor sums.  In sections 4, 5 and 6 we deal with the semi-diagonal case where one of the shifted divisor sums is degenerate.  In sections 7, 8 and 9 we motivate heuristically the identity of Theorem 2.
This identity is sufficiently
complicated that we find it convenient to recast it as an  
equality of  certain power series. 
Sections 10 and 11 are devoted to the 
rigorous proof of this identity.

\section{Type II convolution sums}
To proceed we  approach the moment $I_{A,B}^\psi(T;X)$ 
  through arithmetic means.
 To do this, we consider a convolution of shifted correlation sums.

We first make use of the fact that if $A=A_1\cup A_2$ and $B=B_1\cup B_2$ then $\tau_A$ and $\tau_B$ are convolutions: 
$\tau_A=\tau_{A_1}*\tau_{A_2}$ and $\tau_B=\tau_{B_1}*\tau_{B_2}$. We are thus interested in  
$$\mathcal O_{II}=\sum_{m_1m_2,n_1n_2 \le X\atop
0<|m_1m_2-n_1n_2|<m_1m_2/\tau} \frac{\tau_{A_1}(m_1)\tau_{A_2}(m_2) \tau_{B_1}(n_1)\tau_{B_2}(n_2)}
 {m_1m_2}  \hat \psi \big(\frac{T}{2\pi}\log((n_1n_2)/(m_1m_2))\big).$$
Now we embark on a discrete analog of the circle method which basically consists of approximating a ratio, say $m_1/n_1$ by 
a rational number with a small denominator, say $M/N$, and then sum all of the terms with $m_1/n_1$ close to $M/N$.

 To this end  
we introduce a parameter $Q$ and subdivide the interval $[0,1]$ into Farey intervals associated with  the fractions $M/N$ with 
$1\le M\le N\le Q$ and $(M,N)=1$ from the Farey sequence $\mathcal F_Q$; see \cite{CKII} for details.
 We
  define 
$$h_1 := m_1 N - n_1 M$$
and  $$h_2: = m_2 M -n_2 N.$$
We have
$$ m_1m_2 MN -n_1n_2 MN = h_1m_2M+h_2m_1 N -h_1 h_2$$
so that 
$$
\frac{m_1 m_2 -n_1 n_2}{m_1m_2}=\frac{h_1}{m_1N}+\frac{h_2}{m_2 M} -\frac{h_1h_2}{m_1m_2MN}
$$
and 
$$\log\frac{n_1n_2}{m_1m_2}=\frac{h_1}{m_1N}+\frac{h_2}{m_2 M} +O\big(\frac{h_1h_2}{m_1m_2MN}\big).
$$
The error term is negligible so we 
have now  arranged the sum as 
\begin{eqnarray} \label{eqn:start}
\sum_{M\le N\le Q\atop (M,N)=1}
\sum_{h_1,h_2 }
\sum_{{m_1m_2\le X }\atop {(*_1), (*_2) }}
\frac{\tau_{A_1}(m_1)\tau_{A_2}(m_2) \tau_{B_1}(n_1)\tau_{B_2}(n_2)}{m_1m_2}
 \hat \psi  \left(\frac{Th_1}{2\pi m_1N}+\frac{Th_2}{2\pi m_2 M} \right)
\end{eqnarray}
 where 
$$ (*_1): m_1N-n_1M=h_1 \qquad \qquad \mbox{ and } \qquad \qquad  (*_2): m_2M-n_2N=h_2
$$
Note that for a given $m_1, n_1$ and $h_1$ the condition $(*_1)$ implies that $m_1/n_1 \in \mathcal M_{M,N}$
so we don't need to write that condition. 

\section{The case of $h_2=0$} 
We remark first of all that the terms with $h_1=h_2=0$ are precisely the diagonal terms. Now we consider what happens if $h_2=0$ and $h_1\ne0$.
We call this a ``semi-diagonal'' term after [BK].

If $h_2=0$ then $m_2M=n_2N$. Since $(M,N)=1$ it follows that
$m_2=N\ell$ and $n_2=M\ell$ for some $\ell$.
Thus we have
$$\sum_{M\le N\atop (M,N)=1}\phi\left( \frac M Q\right)  \phi\left( \frac N Q\right)
\sum_{ h_1}
\sum_{{m_1, n_1 ,\ell }\atop {(*_1) \atop n_1\ge |h_1|Q }}
\frac{\tau_{A_1}(m_1)\tau_{A_2}(N\ell) \tau_{B_1}(n_1)\tau_{B_2}(M\ell)}{m_1N\ell}
 \hat \psi  \left(\frac{Th_1}{2\pi m_1N} \right)
$$
where
$$(*_1): m_1N-n_1M=h_1 .$$
In general, with $*:mN-nM=h $, we expect by the delta-method that 
\begin{eqnarray}&& \label{eqn:delta}
\langle \tau_{A}(m)\tau_{B}(n)\rangle^{(*)}_{m= u}\sim
\sum_{\alpha\in A\atop  \beta\in B}u^{-\alpha- \beta}M^{-1+\beta}N^{-\beta}
Z(A'_{-\alpha})Z(B'_{-\beta})
\\&& \quad \times
 \sum_{d\mid h} \frac 1 {d^{1-\alpha- \beta}} \sum_{q} \frac{\mu(q)(qd,M)^{1-\beta}
(qd,N)^{1- \alpha}}{q^{2-\alpha-\beta}} 
  G_{A}\big(1-\alpha,\frac{qd}{(qd,N)}\big)  
  G_{B}\big(1-\beta,\frac{qd}{(qd,M)}) ,\nonumber
\end{eqnarray}
where $G$ is a multiplicative function for which
$$G_A(1- \alpha,p^r)=\prod_{\hat \alpha\in A'}\left(1-\frac 1{p^{1+\hat \alpha-\alpha}}\right)
\sum_{j=0}^\infty \frac{\tau_{A'}(p^{j+r})}{p^{j(1- \alpha)}}$$
with $A'=A-\{ \alpha\}$  and where
$$Z(A)=\prod_{a\in A}\zeta(1+a).$$
 
\section{A diversion}
The ensuing calculations are about to become (more) complicated largely due to arithmetic factors.
We pause in the calculation to show what the calculations look like without
the arithmetic factors. That should help the reader when we complete this calculation in the next section.
 Basically we ignore the
terms with $q\geq2$ and we replace
$G_A(1-\alpha,d)$ by $\tau_{A'}(d)$.

Altogether we now have 
\begin{eqnarray*} &&
\sum_{ \alpha \in A_1\atop 
 \beta\in B_1} Z((A_1')_{-\alpha})Z((B_1')_{- \beta})   
\sum_{M\le N\atop (M,N)=1}\phi\left( \frac M Q\right)  \phi\left( \frac N Q\right)
\sum_{\ell, h_1} \frac{\tau_{A_2}(\ell)\tau_{B_2}(\ell)}{\ell}
\\&& \qquad \times \int_{u\le \frac{X}{N\ell}} \sum_{d\mid h_1}
 \frac{(d,N)^{1-{\alpha}}(d,M)^{1-{\beta}}
\tau_{A_1'}\big(\frac{d}{(d,N)}\big)\tau_{B_1'}\big(\frac{d}{(d,M)}\big)\tau_{A_2}(N)\tau_{B_2}(M)}{
M^{1- \beta}N^{1+ \beta}u^{\alpha+\beta}d^{1-\alpha- \beta}}
 \hat \psi  \left(\frac{Th_1}{2\pi uN} \right)\frac{du}{u}.
\end{eqnarray*}
We make the substitution 
$$v=\frac{Th_1}{2\pi uN}.$$
The above is 
 \begin{eqnarray*} &&
\sum_{ \alpha \in A_1\atop 
 \beta\in B_1}Z((A_1')_{-\alpha})Z((B_1')_{- \beta})  
\sum_{M\le N\atop (M,N)=1}\phi\left( \frac M Q\right)  \phi\left( \frac N Q\right)
\sum_{\ell, h_1} \frac{\tau_{A_2}(\ell)\tau_{B_2}(\ell)}{\ell}
\\&& \qquad \times \int_{v\ge \frac{Th_1\ell}{2\pi X}} \sum_{d\mid h_1}
 \frac{(d,N)^{1-{\alpha}}(d,M)^{1-{\beta}}
\tau_{A_1'}\big(\frac{d}{(d,N)}\big)\tau_{B_1'}\big(\frac{d}{(d,M)}\big)\tau_{A_2}(N)\tau_{B_2}(M)}{
M^{1- \beta}N^{1+ \beta}\left(\frac{Th_1}{2\pi vN}\right) ^{\alpha+\beta}d^{1-\alpha- \beta}}
 \hat \psi (v)\frac{dv}{v}.
\end{eqnarray*}
Now we switch the sums around; replacing $h_1$ by $h_1d$ and bringing the sum over $h_1$ and $\ell$ to the inside, we have 
 \begin{eqnarray*} &&
\sum_{ \alpha \in A_1\atop 
 \beta\in B_1}\left(\frac{T}{2\pi}\right)^{- \alpha- \beta}Z((A_1')_{-\alpha})Z((B_1')_{- \beta})  
\sum_{M\le N\atop (M,N)=1}\frac{\phi\left( \frac M Q\right)  \phi\left( \frac N Q\right)\tau_{A_2}(N)\tau_{B_2}(M)
}{M^{1- \beta}N^{1- \alpha}}
\\&& \qquad \times 
\int_v  
 \frac{ \hat \psi (v)}{v^{1-\alpha- \beta}}
\sum_{d\ell h_1 \le \frac{2\pi Xv}{T}} \frac{(d,N)^{1-{\alpha}}(d,M)^{1-{\beta}}
\tau_{A_1'}\big(\frac{d}{(d,N)}\big)\tau_{B_1'}\big(\frac{d}{(d,M)}\big)\tau_{A_2}(\ell)\tau_{B_2}(\ell)}{
 d \ell h_1^{ \alpha+ \beta}}
 ~dv.
\end{eqnarray*}
Using Perron's formula we write this as 
\begin{eqnarray*} &&
\sum_{ \alpha \in A_1\atop 
 \beta\in B_1}\left(\frac{T}{2\pi}\right)^{- \alpha- \beta}Z((A_1')_{-\alpha})Z((B_1')_{- \beta})  
\sum_{M\le N\atop (M,N)=1}\frac{\phi\left( \frac M Q\right)  \phi\left( \frac N Q\right)\tau_{A_2}(N)\tau_{B_2}(M)
}{M^{1- \beta}N^{1- \alpha}}
\\&& \qquad \times 
\int_v  
 \frac{ \hat \psi (v)}{v^{1-\alpha- \beta}}
\frac{1}{2\pi i}\int_{(2)}\sum_{d,\ell, h_1 } \frac{(d,N)^{1-{\alpha}}(d,M)^{1-{\beta}}
\tau_{A_1'}\big(\frac{d}{(d,N)}\big)\tau_{B_1'}\big(\frac{d}{(d,M)}\big)\tau_{A_2}(\ell)\tau_{B_2}(\ell)}{
 d^{s+1}\ell^{s+1} h_1^{s+ \alpha+ \beta}}
\\&&\qquad \qquad \qquad 
\left( \frac{2\pi Xv}{T}\right)^s\frac{ds}{s}
 ~dv.
\end{eqnarray*}
The sum over $\ell$ and $h_1$ here is essentially
$$\zeta(s+\alpha+ \beta) Z((A_2)_s,B_2).$$
The sum over $d$, $M$ and $N$ we 
 evaluate to a first approximation by looking at the polar parts of 
\begin{eqnarray*} 
\sum_{d, M,N\atop (M,N)=1}
\frac{(d,M)^{1-\beta}(d,N)^{1-\alpha}\tau_{A_1'}(\frac{d}{(d,N)}) 
\tau_{B_1'}(\frac{d}{(d,M)})\tau_{A_2}(N)
 \tau_{B_2}(M)}
{
 d^{1+s}M^{1-\beta} 
N^{1-\alpha} };
 \end{eqnarray*}
these  are  calculated with the help of the following table:  
\begin{eqnarray*}
\begin{array}{|l|l|l|l|l|l|}
\hline
d&M&N& \mbox{Euler term}& Z-\mbox{factor}\\
\hline
p&1&1&\tau_{A_1'}(p)\tau_{B_1'}(p)/p^{1+s}&Z((A_1')_{s}, B_1')\\
\hline
1&p&1& \tau_{B_2}(p)/p^{1-\beta } &Z(B_2,\{- \beta\})\\
\hline
1&1&p&\tau_{A_2}(p)/p^{1-\alpha } &Z(A_2,\{- \alpha\})\\
\hline
p&1&p& \tau_{B_1'}(p)\tau_{A_2}(p)/p^{1+s}&Z(B_1', (A_2)_{s})\\
\hline
p&p&1& \tau_{A_1'}(p)\tau_{B_2}(p)/p^{1+s} &Z((A_1')_{s}, B_2)\\
\hline
\end{array}
\end{eqnarray*}
We take the product of all of these $Z$ factors.
Now the $v$-integral is 
$$\int_v  
 \frac{ \hat \psi (v)}{v^{1-s-\alpha- \beta}}~dv=(1/2) \chi(1-s-\alpha-\beta) \int_0^\infty \psi(t) 
t^{-s-\alpha-\beta}~dt.
$$
 Note that
$$   \chi(1-s-\alpha-\beta)\zeta(s+\alpha+ \beta)=
 \zeta(1-s-\alpha-\beta).$$

If we include the factors $Z((A_2)_s,B_2)$, $Z((A_1')_{-\alpha})Z((B_1')_{- \beta})  $, 
and $Z(\{-s-\alpha\}, \{- \beta\})$ 
then the product of all of these $Z$-factors is 
\begin{eqnarray*} 
Z\bigg((A_1' \cup A_2)_{s} \cup \{- \beta\}  
, B_1' \cup B_2 \cup \{-s-\alpha\}  \bigg)=Z\bigg((A')_{s} \cup \{- \beta\}  
, B' \cup \{-s-\alpha\}  \bigg).
\end{eqnarray*}

 Thus, altogether we have
\begin{eqnarray*} &&
\sum_{ \alpha \in A_1\atop 
 \beta\in B_1}\left(\frac{T}{2\pi}\right)^{- \alpha- \beta}
\int_t  
 \frac{ \psi(t)}{t^{\alpha+ \beta}}
\frac{1}{2\pi i}\int_{(2)}Z\bigg((A')_{s} \cup \{- \beta\}  
, B' \cup \{-s-\alpha\}  \bigg) \left( \frac{2\pi X}{tT}\right)^s\frac{ds}{s} ~dt.
\end{eqnarray*}
 Compare this with equation (4) of [CK3] which gives the ``one-swap'' terms from the recipe:
\begin{eqnarray*} &&
\int_0^\infty \psi(t) \sum_{\alpha\in A\atop  \beta \in B}
\left(\frac {Tt}{2\pi}\right)^{- \alpha - \beta} Z((A')_{-\alpha})Z((B')_{- \beta})  \\
&& \qquad \qquad \times \frac{1}{2\pi i}
\int_{\Re s=4}
\frac{\left(\frac {2\pi X}{Tt}\right)^s}{s} \mathcal A(A'\cup\{-\beta-s\},B'_s\cup\{-\alpha\})
 Z(A'_{s},B')  \zeta(1-\alpha-\beta -s)~ds. \nonumber
\end{eqnarray*}
The only differences are that so far we have ignored the arithmetic factors
 and that in the expression we just derived we have the restrictions $\alpha \in A_1$ and 
$ \beta \in B_1$. But as $A_1$ and $B_1$ vary through subsets of $A$ and $B$ every possible $\alpha$ and 
$ \beta$ will appear. Also, we have the terms where $M>N$ and those with $h_1=0$.  
 
\section{The same calculation with the arithmetic factors}
We replace $m_1$ by   $u_1$; taking into account the arithmetic considerations and also using
  $u_1\ell  N=m_1m_2\le X$, we have that our sum is 
  $\sum_{\alpha,\beta} Z((A'_1)_{-\alpha})Z((B'_1)_{-\beta}) \times $
\begin{eqnarray*} &&
\sum_{M\le N\atop (M,N)=1} \phi\left( \frac M Q\right)  \phi\left( \frac N Q\right) 
\sum_{h_1}
\sum_{ \ell  }\frac{\tau_{A_2}(N\ell )\tau_{B_2}(M\ell )}{M^{1-\beta}N^{1+ \beta}\ell } \int_{u_1\ell \le \frac{X}{ N} }  \sum_{d\mid h_1 } 
\sum_{q=1}^\infty \frac{\mu(q)(qd,M)^{1-\beta}
(qd,N)^{1- \alpha}}{d^{1- \alpha-\beta}q^{2- \alpha- \beta}}\\
&&\qquad \times G_{A_1}\big(1-\alpha,\frac{qd}{(qd,N)}\big)  
  G_{B_1}\big(1-\beta,\frac{qd}{(qd,M)})    
 \hat \psi  \left(\frac{Th_1}{2\pi u_1N} \right)~\frac{du_1}{u_1^{1+\alpha+\beta}}.
\end{eqnarray*}

The term with $h_1=0$ just leads to diagonal terms which are easy to deal with. 
Now we group the non-zero terms  $h_1$ and $-h_1$ together and
 use $ \psi(-v)=\overline { \psi(v)}$.  We replace $h_1$ by $h_1d$.
We make the substitution $v_1=\frac{Th_1d}{2\pi u_1N}$ in the integral and switch the integral 
over $v_1$ with the sum over $h_1$, $d$  and $\ell$. 
Then (with $h_1>0$) we have that  
$$\frac{\ell NTh_1d}{2\pi v_1N}=u_1\ell N\le X
$$
implies that $$\ell h_1 d\le \frac{2\pi Xv_1}{T}.$$
Thus we have
\begin{eqnarray*}
&&\sum_{\alpha \in A_1\atop
 \beta\in B_1} 
\left(\frac{T}{2\pi}\right)^{-\alpha- \beta} Z((A_1')_{- \alpha})Z((B_1')_{ -\beta})
\sum_{M\le N\atop (M,N)=1} \frac{\phi\left( \frac M Q\right)  \phi\left( \frac N Q\right) }{M^{1-\beta}
N^{1-\alpha}}
 \int_{0}^\infty   (2\Re {\hat \psi }(v_1))
\\&&\qquad \times
 \sum_{h_1\ell d\le \frac{2\pi Xv_1}{T}}\frac{\tau_{A_2}(N\ell)\tau_{B_2}(M\ell)}
{h_1^{\alpha+\beta}  \ell d}
\sum_{q\ge1} \frac{\mu(q)(qd,M)^{1-\beta}(qd,N)^{1-\alpha}}{q^{2-\alpha- \beta}} 
 \\
&&\qquad \qquad \times 
   G_{{A_1}}\left(1- \alpha,\frac {qd}{(qd,N)} \right)  
G_{{B_1}}\left(1-\beta,\frac {qd}{(qd,M)} \right)  
  ~\frac{dv_1}{v_1^{1-\alpha-\beta}}.
\end{eqnarray*}

Now we use Perron's formula to evaluate the sum over $h_1\ell d$. This gives
\begin{eqnarray*}
&&\sum_{\alpha \in A_1\atop
 \beta\in B_1}\left(\frac{T}{2\pi}\right)^{- \alpha - \beta} Z((A_1')_{- \alpha})Z((B_1')_{ -\beta})
\sum_{M\le N\atop (M,N)=1} \frac{\phi\left( \frac M Q\right)  \phi\left( \frac N Q\right) }{M^{1-\beta}N^{1- \alpha}}
\frac{1}{2\pi i}\int_{(2)}
\\&&\qquad \times \int_{0}^\infty   (2\Re {\hat \psi }(v_1)) \left(\frac{2\pi Xv_1}{T}\right)^s
 \sum_{h_1,\ell, d} \frac{\tau_{A_2}(N\ell)\tau_{B_2}(M\ell)}
{h_1^{s+\alpha+\beta}  \ell^{1+s} d^{1+s } }
\sum_{q\ge1} \frac{\mu(q)(qd,N)^{1-\alpha}(qd,M)^{1- \beta}}{q^{2-\alpha- \beta}}    \\
&&\qquad \qquad \times 
   G_{{A_1}}\left(1- \alpha,\frac {qd}{(qd,N)} \right)  
G_{{B_1}}\left(1-\beta,\frac {qd}{(qd,M)} \right)  
    ~\frac{dv_1}{v_1^{1- \alpha- \beta}}\frac{ds}{s}.
\end{eqnarray*}
The sum over $h_1$ is $\zeta(s+\alpha+\beta)$. The integral over $v_1$  is 
\begin{eqnarray*}
\int_{0 }^{\infty} v_1^{-1+s+ \alpha+ \beta}
(2\Re  \hat \psi (v_1))~dv_1
 =\chi(1-s- \alpha - \beta ) \int_0^\infty \psi(t)t^{-s-\alpha - \beta}~dt.
\end{eqnarray*}
Combining these two facts and using the functional equation for $\zeta$ we have
 \begin{eqnarray*}
&&\sum_{\alpha \in A_1\atop
 \beta\in B_1}\left(\frac{T}{2\pi}\right)^{- \alpha - \beta} Z((A_1')_{- \alpha})Z((B_1')_{ -\beta})
\int_{0 }^{\infty} t^{- \alpha- \beta}
 \psi(t)
\sum_{M\le N\atop (M,N)=1} \frac{\phi\left( \frac M Q\right)  \phi\left( \frac N Q\right) }{M^{1- \beta}N^{1-\alpha}}
\\&&  \times  \frac{1}{2\pi i}\int_{(2)}\zeta(1-s-\alpha- \beta)
 \left(\frac{2\pi X}{tT}\right)^s\sum_{\ell, d} \frac{\tau_{A_2}(N\ell)\tau_{B_2}(M\ell)}
{   \ell^{1+s} d^{1+s } }
  \\
&&\quad  \times \sum_{q\ge1} \frac{\mu(q)(qd,N)^{1-\alpha}(qd,M)^{1- \beta}}{q^{2-\alpha- \beta}}  
   G_{{A_1}}\left(1- \alpha,\frac {qd}{(qd,N)} \right)  
G_{{B_1}}\left(1-\beta,\frac {qd}{(qd,M)} \right)  
    ~ \frac{ds}{s}~dt.
\end{eqnarray*}
 
This requires studying the Dirichlet series
\begin{eqnarray*}
&& 
\sum_{  (M,N)=1} \frac{1 }{M^{1-\beta}N^{1-\alpha}}
 \sum_{\ell, d} \frac{\tau_{A_2}(N\ell)\tau_{B_2}(M\ell)}
{  \ell^{1+s} d^{1+s } }
\sum_{q\ge1} \frac{\mu(q)(qd,N)^{1-\alpha}(qd,M)^{1- \beta}}{q^{2-\alpha- \beta}}    \\
&&\qquad \qquad \times 
   G_{{A_1}}\left(1- \alpha,\frac {qd}{(qd,N)} \right)  
G_{{B_1}}\left(1-\beta,\frac {qd}{(qd,M)} \right)  
\end{eqnarray*}
 
See the appendix for the resolution of this arithmetic factor.

Regarding multiplicities, see the section on automorphisms at the end of the paper. 

Taking account of the terms with $h_1=0$ and $h_2\ne 0$ we find that 
we have now accounted for all of the one-swap terms from the semi-diagonal contributions.

\section{$h_1h_2\ne 0$}
Now we come to the crux of the paper, the terms where neither $h_1$ nor $h_2$ are 0;
we need to match these up with the two swap terms. 

In the formula (\ref{eqn:start}) above
we replace the convolution sums by their averages, i.e.
\begin{eqnarray*}
 \iint_{u_1u_2\le X  }\langle \tau_{A_1}(m_1)\tau_{B_1}(n_1)\rangle^{(*_1)}_{m_1\sim u_1}
\langle \tau_{A_2}(m_2)\tau_{B_2}(n_2)\rangle^{(*_2)}_{m_2\sim u_2} 
 \hat \psi  \left(\frac{Th_1}{2\pi u_1N}+\frac{Th_2}{2\pi u_2 M} \right)~\frac{du_1}{u_1}\frac{du_2}
 {u_2}.
\end{eqnarray*}
 
We insert the formula (\ref{eqn:delta}) for these averages. After switching the ensuing sums over $h_1,h_2$ and 
$d_1,d_2$ we have $\sum_{M,N} \phi(M/Q)\phi(N/Q)$ times 
 \begin{eqnarray*}&&
\sum_{{\alpha_1}\in A_1\atop \alpha_2\in A_2} 
\sum_{{\beta_1}\in B_1\atop \beta_2\in B_2} Z((A_1')_{- \alpha_1})Z((A_2')_{- \alpha_2})
Z((B_1')_{- \beta_1})Z((B_2')_{- \beta_2}) \sum_{q_1,d_1,h_1 \atop 
q_2,d_2,h_2} \frac{\mu(q_1)\mu(q_2)}{q_1^{2-\alpha_1- {\beta}_1}q_2^{2-\alpha_2- {\beta}_2}}
 \\
&&
\times  \frac{G_{A_1}(1- \alpha_1, \frac{q_1d_1}{(q_1d_1,N)}) 
G_{A_2}(1- \alpha_2 ,\frac{q_2d_2}{(q_2d_2,M)})
G_{B_1}(1- \beta_1, \frac{q_1d_1}{(q_1d_1,M)}) G_{B_2}(1- \beta_2 ,\frac{q_2d_2}{(q_2d_2,N)})}
{(q_1d_1,N)^{-1+\alpha_1}(q_1d_1,M)^{-1+\beta_1}(q_2d_2,M)^{-1+\alpha_2}(q_2d_2,N)^{-1+\beta_2}
d_1^{1- \alpha_1 - \beta_1}
d_2^{1- \alpha_2 - \beta_2}
}
\\
&& \qquad  \times 
 \iint_{T^2\le u_1u_2\le X}M^{-1+\beta_1-\beta_2}N^{-1+ \beta_2 -\beta_1}  u_1^{- \alpha_1 - \beta_1}
u_2^{- \alpha_2 - \beta_2}\hat \psi  \left(\frac{Th_1d_1}{2\pi u_1N}+
\frac{Th_2d_2}{2\pi u_2M}\right)\frac{du_1}{u_1}\frac{du_2}{u_2}.
\end{eqnarray*}
Let's first assume that $h_1>0$ and $h_2>0$. 
We make the changes of variable $v_1=\frac{Th_1d_1}{2\pi u_1N}$ and $v_2=\frac{Th_2d_2}{2\pi u_2M}$  and bring the sums over $h_1$ 
and $h_2$ to the inside; $u_1u_2<X$ implies that 
$$ h_1d_1h_2d_2< \frac{4\pi^2 Xv_1v_2MN}{T^2}.$$ 
Then the sums over the $q_i,h_i,d_i$  are $N^{-1+\alpha_1+\beta_2} M^{-1+\alpha_2+\beta_1}$ times
\begin{eqnarray*}&&\left(\frac{T}{2\pi}\right)^{- \alpha_1 - \alpha_2-
 \beta_1 - \beta_2}
\iint_{v_1,v_2} 
 v_1^{ \alpha_1 + \beta_1}v_2^{ \alpha_2 + \beta_2}\hat \psi (v_1+v_2)
\frac{1}{2\pi i} \int_{(2)} \sum_{q_1,d_1,h_1 \atop 
q_2,d_2,h_2}  \frac{\mu(q_1)\mu(q_2)}{q_1^{2-\alpha_1- {\beta_1}}
q_2^{2-\alpha_2- {\beta}_2}}
\\&&\qquad \times \frac{G_{A_1}(1- \alpha_1, \frac{q_1d_1}{(q_1d_1,N)}) G_{A_2}(1- \alpha_2 ,\frac{q_2d_2}{(q_2d_2,M)})
G_{B_1}(1- \beta_1, \frac{q_1d_1}{(q_1d_1,M)}) G_{B_2}(1- \beta_2 ,\frac{q_2d_2}{(q_2d_2,N)})}
{(q_1d_1,N)^{-1+\alpha_1}(q_1d_1,M)^{-1+\beta_1}(q_2d_2,M)^{-1+\alpha_2}
(q_2d_2,N)^{-1+\beta_2}d_1^{1+s}
d_2^{1+s}h_1^{\alpha_1+\beta_1+s}h_2^{\alpha_2+\beta_2+s}
 }
\\&&\qquad \qquad \qquad \times \frac{\left(\frac{4\pi^2 Xv_1v_2MN}{T^2}\right)^s}{s}
~ds
\frac{dv_1}{v_1}\frac{dv_2}{v_2}.
\end{eqnarray*}
The sums over $h_1$ and $h_2$ are $\zeta(s+\alpha_1+\beta_1)\zeta(s+\alpha_2+\beta_2)$.
The other 3 cases of the signs of $h_1$ and $h_2$ can be taken care of similarly. 
Then we use 
$$\hat \psi (v_1+v_2)=\int_0^\infty \psi(t)e(t(v_1+v_2))~dt$$
to see that 
\begin{eqnarray*}&&
 \hat \psi (v_1+v_2)+ \hat \psi (v_1-v_2)+ \hat \psi (-v_1+v_2)+ \hat \psi (-v_1-v_2)\\&&\qquad \qquad
=\int_0^\infty \psi(t)\big(e(tv_1)+e(-tv_1)\big)\big(e(tv_2)+e(-tv_2)\big)~dt.
\end{eqnarray*}
Also
$$ \int_0^\infty v_1^{s+\alpha+\gamma-1}(e(tv_1)+e(-tv_1))~dv_1 =t^{-s-\alpha-\gamma}\chi(1-s-\alpha-\gamma),$$ 
and similarly for the integral over $v_2$.
This leaves us with a    total for the sum over $M,N,q_i,h_i,d_i$  of 
\begin{eqnarray*}&&
\int_0^\infty \psi(t)  
\left(\frac{tT}{2\pi}\right)^{- \alpha_1 - \alpha_2-
 \beta_1 - \beta_2}
\frac{1}{2\pi i} \int_{(2)}\frac{\left(\frac{4\pi^2 X }{t^2T^2}\right)^s}{s}
\zeta(1-s-\alpha_1- \beta_1)\zeta(1-s-\alpha_2- \beta_2)
\\&&\qquad \sum_{(M,N)=1\atop M\le N}\frac{\phi(M/Q)\phi(N/Q)}{M^{1-s-\alpha_2-\beta_1}N^{1-s-\alpha_1-\beta_2}}
 \sum_{q_1,d_1 \atop 
q_2,d_2} \frac{\mu(q_1)\mu(q_2)}{q_1^{2-\alpha_1- {\beta}_1}q_2^{2-\alpha_2- {\beta}_2}}
\frac{G_{A_1}(1- \alpha_1, \frac{q_1d_1}{(q_1d_1,N)})}
{(q_1d_1,N)^{-1+\alpha_1}}
\\&&\qquad  \qquad \times\frac{  G_{A_2}(1- \alpha_2 ,\frac{q_2d_2}{(q_2d_2,M)})
G_{B_1}(1- \beta_1, \frac{q_1d_1}{(q_1d_1,M)}) G_{B_2}(1- \beta_2 ,\frac{q_2d_2}{(q_2d_2,N)})}
{
 (q_1d_1,M)^{-1+\beta_1}(q_2d_2,M)^{-1+\alpha_2}(q_2d_2,N)^{-1+\beta_2}d_1^{1+s}
d_2^{1+s} 
}~ds ~dt.
\end{eqnarray*}
Recall that
\begin{eqnarray*}
G_A(1- \alpha , p)&=&  \tau_{A'}(p)+O(1/p);
\end{eqnarray*}
we use this to calculate  the polar part of 
\begin{eqnarray*} 
\sum_{d_1,d_2\atop {(M,N)=1\atop M\leq N}}
\frac{\tau_{A_1'}(\frac{d_1}{(d_1,N)}) 
\tau_{B_1'}(\frac{d_1}{(d_1,M)})\tau_{A_2'}(\frac{d_2}{(d_2,M)})
 \tau_{B_2'}(\frac{d_2}{(d_2,N)})}
{(d_1,N)^{-1+\alpha_1}(d_1,M)^{-1+\beta_1}(d_2,M)^{-1+\alpha_2}(d_2,N)^{-1+\beta_2}
 d_1^{1+s}d_2^{1+s}M^{1-s-\alpha_2 - \beta_1} 
N^{1-s-\alpha_1 - \beta_2} }.
 \end{eqnarray*}

We do this by calculating the significant parts of the Euler product. The following table is helpful;
we let $A_1'=A_1-\{\alpha_1\}$, $A_2'=A_2-\{\alpha_2\}$, $B_1'=B_1-\{ \beta_1\}$, and $B_2'=B_2-\{ \beta_2\}$. 
\begin{eqnarray*}
\begin{array}{|l|l|l|l|l|l|}
\hline
d_1&d_2&N&M& \mbox{Euler term}& Z-\mbox{factor}\\
\hline
p&1&1&1&\tau_{A_1'}(p)\tau_{B_1'}(p)/p^{1+s}&Z((A_1')_s, B_1')\\
\hline
1&p&1&1&\tau_{A_2'}(p)\tau_{B_2'}(p)/p^{1+s}&Z((A_2')_s, B_2')\\
\hline
1&1&p&1&p^{ \alpha_1+ \beta_2}/p^{1-s}&Z(\{- \alpha_1-s\}, \{- \beta_2\})\\
\hline
1&1&1&p&p^{ \alpha_2+ \beta_1}/p^{1-s}&Z(\{- \alpha_2-s\}, \{- \beta_1\})\\
\hline
p&1&p&1& \tau_{B_1'}(p)/p^{1- \beta_2}&Z(B_1', \{- \beta_2\})\\
\hline
p&1&1&p& \tau_{A_1'}(p)/p^{1- \alpha_2 }&Z(A_1', \{- \alpha_2\})\\
\hline
1&p&p&1& \tau_{A_2'}(p)/p^{1- \alpha_1}&Z(A_2', \{- \alpha_1\})\\
\hline
1&p&1&p& \tau_{B_2'}(p)/p^{1- \beta_1}&Z(B_2', \{- \beta_1\})\\
\hline
p&p&p&1& \tau_{A_2'}(p)\tau_{B_1'}(p)/p^{1+s} &Z((A_2')_s, B_1')\\
\hline
p&p&1&p& \tau_{A_1'}(p)\tau_{B_2'}(p)/p^{1+s}&Z((A_1')_s, B_2') \\
\hline
\end{array}
\end{eqnarray*}
If we include the factors $Z((A_1')_s,\{-s-\alpha_1\})Z(B_1',\{-\beta_1\})$, 
$Z((A_2')_s,\{-s-\alpha_2\})Z(B_2',\{-\beta_2\})$,
$Z(\{-s-\alpha_1\}, \{- \beta_1\})$ and $Z(\{-s-\alpha_2\}, \{- \beta_2\})$
then the product of all of these $Z$-factors is 
\begin{eqnarray*}&& 
Z\big((A_1' \cup A_2')_s \cup \{- \beta_1\} \cup \{- \beta_2\}
, B_1' \cup B_2'\cup \{-s- \alpha_1\}\cup \{-s- \alpha_2\}\big)\\
&&\qquad
=Z((A-S)_s+T^-,B-T+(S_s)^-)
\end{eqnarray*}
where $S=\{\alpha_1, \alpha_2\}$ and $T=\{ \beta_1,\beta_2\}$.
 
The predicted two-swap terms from the recipe are
\begin{eqnarray*}&&
\sum_{S\subset A,T\subset B
\atop |S|=|T|=2}\int_0^\infty \psi(t)\left(\frac{tT}{2\pi}\right)^{-\sum_{ \alpha\in S
\atop  \beta\in T}( \alpha +\beta)} 
\frac{1}{2\pi i} \int_{(2)}\frac{\left(\frac{4\pi^2 X }{t^2T^2}\right)^s}{s}
\mathcal AZ((A-S)_s+T^-,B-T+S_s^-)~ds ~dt
\end{eqnarray*}
which matches the above except that $S$ and $T$ are allowed to range over all two-element subsets 
of $A$ and $B$ in the recipe version whereas in the correlation version we first split $A=A_1\cup A_2$
and $B=B_1\cup B_2$ and then take one element from $A_1$ and one from $A_2$ to make up our two element set
$S$ and similarly one element from $B_1$ and one from $B_2$ to make our two element set $T$.  

See the last two sections for the calculation of the arithmetic factor.

\section{Automorphisms}
The final step of this paper is to explain the apparent over-counting that has occurred.
The explanation is that there are automorphisms that have to be taken into account. In this
section we explain these multiplicities.

We start with
\begin{eqnarray*}
\left\{\begin{array}{ll} Nm_1&=Mn_1+h_1\\
Mm_2&=Nn_2+h_2
\end{array}
\right.
\end{eqnarray*}
Suppose $m_1=\mu_1\hat{\mu_1}$, $m_2=\mu_2\hat{\mu_2}$, $n_1=\nu_1\hat{\nu_1}$
and $n_2=\nu_2\hat{\nu_2}$.
Multiply the first equation by $\mu_2\nu_2$ and the second equation by $\mu_1\nu_1$.
Let
$$\tilde M= \nu_1\mu_2 M; \qquad \tilde N=\mu_1\nu_2 N; \qquad \tilde {m_1}=\hat{\mu_1}\mu_2; \qquad
 \tilde {m_2}=\mu_1\hat {\mu_2};\qquad  \tilde{n_1} =\hat{\nu_1}\nu_2 ;\qquad \tilde{n_2}=\nu_1\hat{\nu_2}.
$$
Then we have 
\begin{eqnarray*}
\left\{\begin{array}{ll} \tilde{N}\tilde{m_1}&=\tilde{M}\tilde{n_1}+\tilde{h_1}\\
\tilde{M}\tilde{m_2}&=\tilde{N}\tilde{n_2}+\tilde{h_2}
\end{array}
\right.
\end{eqnarray*}
where 
$$\tilde{h_1}=\mu_2\nu_2 h_1  \qquad \mbox{and} \qquad \tilde{h_2}=\mu_1\nu_1h_2.$$

This scheme provides lots of automorphisms and explains the overcounting we have.

Basically there is one automorphism for each quadruple of divisors of $m_1,m_2,n_1$ and $n_2$.
We have $m=m_1m_2$ and $n=n_1n_2$ where if $|A|=k$ and $|B|=\ell$ then $\tau_A$ is a convolution
of $k$ and $\tau_B$ a convolution of $\ell$ atomic functions.  We can think of 
$$A=\{\alpha_1,\dots ,\alpha_k\} \qquad B=\{\beta_1,\dots,\beta_\ell\}$$
and with $I=\{1,2,\dots,k\}$ and $J=\{1,2,\dots ,\ell\}$ we partition $I=I_1\cup I_2$ and
$J=J_1\cup J_2$. Then in our decompositions $A=A_1\cup A_2$ and $B=B_1\cup B_2$ we have
$A_1=\{\alpha_i:i\in I_1\}$ etc. These correspond to the decompositions $m=m_1m_2$ and $n=n_1n_2$.
If we write $m=\mu_1\dots \mu_k$ and $n=\nu_1\dots \nu_\ell$ then we can put
$m_1=\prod_{i\in I_1}\mu_i$ etc. The number of such decompositions of $A$ or of $m$ is just the number of subsets 
of $A$, i.e. $2^k$; and the number for $B$ is $2^\ell$. We can associate an automorphism as 
above with each such decomposition. Therefore, there are $2^{k+\ell}$ automorphisms in total. So each term is 
counted with a multiplicity $2^{k+\ell}$. Now let's see that this overcounting is in agreement 
with  the number of ways of producing the term from the recipe with, say, 
$S=\{\alpha_1,\alpha_2\}$ and $T=\{\beta_1,\beta_2\}$.
The term from the recipe will occur whenever we have a decomposition of $A=A_1\cup A_2$ and
$B=B_1\cup B_2$ in which precisely one of $\alpha_1$ and $\alpha_2$ is in $A_1$ and the other in $A_2$ and
similarly for $B$ and the $\beta$s. How many ways are there to do this? If we say that $\alpha_1$ is to be in $A_1$ 
and $\alpha_2$ in $A_2$ then we have $k-2$ other elements to be partitioned into two sets. There are $2^{k-2}$ subsets
and the chosen subset can be assigned to go with $\alpha_1$ or with $\alpha_2$, so we have an extra factor of 2; then
and then another factor of 2 by putting $\alpha_1$ in $A_2$ and $\alpha_2 $ in $A_1$. Therefore, a total of
$2^k$ ways to do this. And $2^\ell$ for the $\beta$s into the $B$s. So, we have the same amount 
of overcounting as there are automorphisms. Taking this into account, we obtain just a single copy
of each term from the recipe. 

\section{Conclusion} We have shown how to obtain an asymptotic formula with power savings
for the mean square of a Dirichlet polynomial of length $X$ where $T^2\ll X\ll T^3$ with coefficients
that are general divisor functions in two different ways: one way is via 
Perron's formula and the recipe, and the other is by calculating a convolution of shifted divisor correlations.
The two approaches give exactly the same answer. 

In the next paper, which will conclude this introductory series, 
we will consider the completely general situation with an arbitrary length Dirichlet polynomial.

\section{The semi-diagonal arithmetic factor}
 
It remains to prove that the arithmetic factors agree.  This calculation is surprisingly involved. In order to carry it out with
minimal notational difficulties we introduce a new set of notation. These appendices are self-contained.

We begin by introducing a little notation.
First of all, we are working locally; basically we are identifying the local $p$-factor in an Euler product.
As far as we are concerned $p$ is fixed for this discussion so we often suppress it. In fact we write $X$ for $1/p$
and mostly consider power series in $X$. We take the unusual step of suppressing not only the prime $p$ 
but the divisor function and so we write $A(n)$ in place of $\tau_A(p^n)$. Also, for a set $A$ we let 
$$A_\alpha=\{a+\alpha:a\in A\}.$$
A further piece of notation: $A^+=A\cup\{0\}$. We have two important identities. The first is 
$$A^+(d)=A(d)+A^+(d-1).$$
This is a special case of
$$ (A\cup\{-\alpha\})(d-1) = X^{\alpha} \bigg( (A\cup\{-\alpha\})(d) - A(d)\bigg).$$
The other identity is 
$$\sum_{r=0}^R A(r+M)=  A^+(R+M)- A^+(M-1)$$
which follows by repeated application of the first identity. 

For arbitrary sets  $A$,$B$, $C$ and $D$  we let
$$\mathcal C(A,B):=\sum_{M=0}^\infty A(M)B(M)X^M$$
and
$$\mathcal F(A,B;C,D)=\sum_{K,L,M}  A(K)B(K+M)C(L)D(L+M)X^{K+L+M};$$

Also, we let 
$$Z(A)=\sum_{j=0}^\infty A(j) X^j=\prod_{a\in A}(1-X^{1+a})^{-1}.$$

We have a lemma about $\mathcal F$ and $\mathcal C$ which is really just a formal manipulation; 
consequently we state it in a more general form. 
\begin{lemma}For any 4 functions $a,A,b,B$ 
let $$F(a,A;b,B)=\sum_{K,L,M} a(K)A(K+M)b(L)B(L+M)X^{K+L+M}$$
and $$C(a,b)=\sum_{r=0}^\infty a(r)b(r)X^r$$
we have
$$F(a,A;b,B)+F(A,a;B,b)=C(A\star b,a\star B)+C(a,A)C(b,B).
$$
\end{lemma}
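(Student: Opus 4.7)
The plan is a direct change of variables that makes manifest the symmetric structure of $F(a,A;b,B)+F(A,a;B,b)$. In $F(a,A;b,B)$ I would set $(i_1,i_2,j_1,j_2)=(K,K+M,L,L+M)$, rewriting the summand as $a(i_1)A(i_2)b(j_1)B(j_2)X^{i_1+j_2}$ subject to $i_2\ge i_1$, $j_2\ge j_1$, and the ``matching'' constraint $i_2-i_1=j_2-j_1$. In $F(A,a;B,b)$ I would apply the analogous substitution, with the roles of $(a,A)$ and $(b,B)$ exchanged in position, obtaining the identical summand on the region $i_1\ge i_2$, $j_1\ge j_2$, $i_2-i_1=j_2-j_1$.

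Summing the two, the union of their index regions is exactly $\{(i_1,i_2,j_1,j_2)\in\mathbb{Z}_{\ge 0}^4:i_2-i_1=j_2-j_1\}$, while their overlap is the diagonal $i_1=i_2$, $j_1=j_2$. Therefore
\[
F(a,A;b,B)+F(A,a;B,b)=\sum_{\substack{i_1,i_2,j_1,j_2\ge 0\\ i_2-i_1=j_2-j_1}}a(i_1)A(i_2)b(j_1)B(j_2)X^{i_1+j_2}\;+\;\sum_{i,j\ge 0}a(i)A(i)b(j)B(j)X^{i+j},
\]
and the second piece is immediately $C(a,A)C(b,B)$.

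For the first piece I would introduce $r:=i_1+j_2=i_2+j_1$ (these are equal by the constraint) as an outer summation variable. The remaining double sum at fixed $r$ then factors as $\bigl(\sum_{i_1+j_2=r}a(i_1)B(j_2)\bigr)\bigl(\sum_{i_2+j_1=r}A(i_2)b(j_1)\bigr)=(a\star B)(r)(A\star b)(r)$, so summation over $r$ yields $C(A\star b,a\star B)$, matching the remaining term on the right-hand side.

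The only real point of care is keeping track of the overcount along the trivial diagonal $M=0$: each of the two $F$'s contributes to this diagonal separately, and it is precisely this doubled contribution that produces the $C(a,A)C(b,B)$ summand in the identity. Apart from that, the argument is a formal manipulation of power series in $X$ with no convergence issues, which is why the lemma can be stated for arbitrary functions $a,A,b,B$ rather than just for the set-theoretic divisor functions of the main text.
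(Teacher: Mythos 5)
Your proof is correct and follows essentially the same route as the paper: both identify $F(a,A;b,B)+F(A,a;B,b)$ with the full sum over the constraint $i_1+j_2=i_2+j_1$ plus the diagonal, factor the constrained sum at fixed $r$ into $(a\star B)(r)(A\star b)(r)$ to get $\mathcal C(A\star b,a\star B)$, and recognize the diagonal as $C(a,A)C(b,B)$. The only cosmetic difference is that you obtain the constrained-sum representation by a direct change of variables and inclusion--exclusion, whereas the paper reaches it via the $\int_0^1$ integral over $\theta$ with $Y=\sqrt X e(\theta)$ and then complements one region inside the constraint.
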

\begin{proof}
Let $Y=\sqrt{X}e(\theta)$. Then  
\begin{eqnarray*}
F(a,A;b,B)&=&\int_0^1 \sum_{r,s,M,N}a(r)A(r+M) {Y}^{r+s+M} 
b(s)B(s+N)  \overline{Y}^{r+s+N}  ~d\theta\\
&=& 
\int_0^1 \sum_{R,S\atop
r\le R; s\le S} a(r)A(R) {Y}^{s+R} 
b(s)B(S)  \overline{Y}^{r+S}  ~d\theta\\
&=& \sum_{r+S=R+s\atop
r\le R; s\le S}a(r)A(R)  
b(s)B(S) X^{r+S}
\end{eqnarray*}
The latter sum is 
\begin{eqnarray*}&&
 \sum_{r+S=R+s}a(r)A(R)  
b(s)B(S) X^{r+S}
- \sum_{r+S=R+s\atop
r> R; s> S}a(r)A(R)  
b(s)B(S) X^{r+S}\\
&&\qquad = C(a\star B,A\star b)+C(a,A)C(b,B)- F(A,a;B,b) 
\end{eqnarray*}
as desired. 
\end{proof}

Now we address the arithmetic factor from the semi-diagonal term. 
The $p$ part of 
\begin{eqnarray*}
&&  Z((A'_1)_{-\alpha})Z((B'_1)_{-\beta})
\sum_{  (M,N)=1} \frac{1 }{M^{1-\beta}N^{1-\alpha}}
 \sum_{\ell, d} \frac{\tau_{A_2}(N\ell)\tau_{B_2}(M\ell)}
{  \ell^{1+s} d^{1+s } }
\sum_{q\ge1} \frac{\mu(q)(qd,N)^{1-\alpha}(qd,M)^{1- \beta}}{q^{2-\alpha- \beta}}    \\
&&\qquad \qquad \times 
   G_{{A_1}}\left(1- \alpha,\frac {qd}{(qd,N)} \right)  
G_{{B_1}}\left(1-\beta,\frac {qd}{(qd,M)} \right)  
\end{eqnarray*}
is  (after setting $s=0$)
\begin{eqnarray*}
&& 
\sum_{  \min(M,N)=0} X^{M(1-\beta)+N(1-\alpha)}
 \sum_{\ell, d}A_2(N+\ell)B_2(M+\ell)
X^{\ell+d }\\
&&\qquad \times 
\sum_{q }  \mu(p^q)X^{-\min(q+d,N)(1-\alpha)-\min(q+d,M)(1- \beta)+q(2-\alpha- \beta)} 
   \\
&&\qquad \qquad \times 
\sum_{j,k}A_1'(j+q+d-\min(q+d,N))B_1'(k+q+d-\min(q+d,M))X^{j(1-\alpha)+k(1-\beta)}
\end{eqnarray*}
We use 
$$\sum_{\min(M,N)=0}f(M,N)=\sum_M f(M,0)+\sum_N f(0,N)-f(0,0)$$
and get $S_L+S_R-S_0$ where 
\begin{eqnarray*}
&& 
S_0=
 \sum_{\ell, d,q,j,k}A_2(\ell)B_2(\ell)
  \mu(p^q)
A_1'(j+q+d )B_1'(k+q+d) X^{ q(2-\alpha- \beta)+\ell + d +j(1-\alpha)+k(1-\beta)},
\end{eqnarray*}
\begin{eqnarray*}
&& 
S_L=\sum_{ M} X^{M(1-\beta)}
 \sum_{\ell, d}A_2(\ell)B_2(M+\ell)
X^{\ell + d }
\sum_{q }  \mu(p^q)X^{- \min(q+d,M)(1- \beta)+q(2-\alpha- \beta)} 
   \\
&&\qquad \qquad \times 
\sum_{j,k}A_1'(j+q+d )B_1'(k+q+d-\min(q+d,M) )X^{j(1-\alpha)+k(1-\beta)}.
\end{eqnarray*}
\begin{eqnarray*}
&& 
S_R=\sum_{ N} X^{N(1-\alpha)}
 \sum_{\ell, d}A_2(N+\ell)B_2(\ell)
X^{\ell + d }
\sum_{q }  \mu(p^q)X^{-\min(q+d,N)(1-\alpha) +q(2-\alpha- \beta)} 
   \\
&&\qquad \qquad \times 
\sum_{j,k}A_1'(j+q+d-\min(q+d,N))B_1'(k+q+d)X^{j(1-\alpha)+k(1-\beta)}.
\end{eqnarray*}
We expand the $q$ sum in $S_0$ to get
\begin{eqnarray*}
&& 
S_0=
 \sum_{\ell, d,j,k}A_2(\ell)B_2(\ell)
A_1'(j+d )B_1'(k+d) X^{ \ell + d +j(1-\alpha)+k(1-\beta)}\\
&&\qquad
-\sum_{\ell, d,j,k}A_2(\ell)B_2(\ell)
A_1'(j+1+d )B_1'(k+1+d) X^{ 2-\alpha- \beta+\ell + d +j(1-\alpha)+k(1-\beta)}.
\end{eqnarray*}
This telescopes in $j$ and $k$ to give
\begin{eqnarray*}
S_0&=&
 \sum_{\ell, d,j}A_2(\ell)B_2(\ell)
A_1'(j+d )B_1'(d) X^{ \ell + d +j(1-\alpha) }
\\&&+ \sum_{\ell, d,k}A_2(\ell)B_2(\ell)
A_1'(d )B_1'(k+d) X^{ \ell + d +k(1-\beta)}
- \sum_{\ell, d }A_2(\ell)B_2(\ell)
A_1'(d )B_1'(d) X^{ \ell + d  }\\
&=& \mathcal C(A_2,B_2)\bigg(\sum_{r} 
X^{ r(1 -\alpha)} A_1'(r )\sum_{d\le r} X^{d\alpha   } B_1'(d)
+\sum_{r} 
X^{ r(1 -\beta)} B_1'(r )\sum_{d\le r}  X^{d\beta   }  A_1'(d) -\mathcal C(A_1',B_1')\bigg)\\
&=&
 \mathcal C(A_2,B_2)\bigg(\sum_{r} 
X^{ r(1 -\alpha)} A_1'(r )  ((B_1')_{\alpha})^+(r) 
+\sum_{r} 
X^{ r(1 -\beta)} B_1'(r ) (( A_1')_{\beta})^+(r) -\mathcal C(A_1',B_1')\bigg)\\
&=& \mathcal C(A_2,B_2)\bigg( \mathcal C((A_1')_{-\alpha},
  ((B_1')_{\alpha})^+) 
+ \mathcal C(
 ( B_1')_{-\beta},(( A_1')_{\beta})^+)-\mathcal C(A_1',B_1')\bigg).
\end{eqnarray*}
This may be rewritten as 
\begin{eqnarray*}
S_0=
\mathcal C(A_2,B_2)\big( \mathcal C(A_1',
  B_1'\cup\{-\alpha\}) 
+ \mathcal C(
  B_1',A_1'\cup\{-\beta\})-\mathcal C(A_1',B_1')\big).
\end{eqnarray*}

Now we turn to $S_L$. 
 Expanding in $q$ we have
\begin{eqnarray*}
S_L 
&=&\sum_{ M,\ell,d,j,k} 
 A_2(\ell)B_2(M+\ell)
 A_1'(j+d )B_1'(k+d-\min(d,M) )\\&&
\qquad \qquad \qquad \qquad \times X^{\ell+d+M(1-\beta)-\min(d,M)(1- \beta)+j(1-\alpha)+k(1-\beta)}\\&&\qquad \qquad - 
\sum_{ M,\ell,d,j,k}
 A_2(\ell)B_2(M+\ell)A_1'(j+1+d )B_1'(k+1+d-\min(1+d,M) ) \\&&\qquad \qquad \qquad \qquad 
\times X^{\ell + d +M(1-\beta)- \min(1+d,M)(1- \beta)+2-\alpha- \beta+j(1-\alpha)+k(1-\beta)} .
\end{eqnarray*}
We split this into $S_L=S_L^-+S_L^+$ where $S_L^-$ denotes those terms for which  $d<M$ and 
$S_L^+$ contains those terms with $d\ge M$. We have
 \begin{eqnarray*}
S_L^- 
&=&\sum_{ M,\ell,j,k\atop d<M} 
 A_2(\ell)B_2(M+\ell)
 A_1'(j+d )B_1'(k  ) X^{\ell+M(1-\beta)+d  \beta+j(1-\alpha)+k(1-\beta)}\\&& - 
\sum_{ M,\ell,j,k\atop d<M }
 A_2(\ell)B_2(M+\ell)A_1'(j+1+d )B_1'(k  )  
X^{\ell  +M(1-\beta)  + d\beta +(j+1)(1-\alpha)+k(1-\beta)} .
\end{eqnarray*} 
The sum over $j$ telescopes; we are left with 
\begin{eqnarray*}
S_L^- 
&=&\sum_{ M,\ell,k\atop d<M} 
 A_2(\ell)B_2(M+\ell)
 A_1'(d )B_1'(k  ) X^{\ell+M(1-\beta)+d  \beta +k(1-\beta)}
\end{eqnarray*}
 We  execute the sum over $k$ to obtain
\begin{eqnarray*}
S_L^- 
&=&Z((B_1')_{- \beta})\sum_{ M,\ell,\atop d<M} 
 A_2(\ell)B_2(M+\ell)
 A_1'(d ) X^{\ell+M(1-\beta)+d \beta  } .
\end{eqnarray*} 
The sum over $d$ gives 
\begin{eqnarray*}
S_L^- 
&=&Z((B_1')_{- \beta})\sum_{ M,\ell } 
 A_2(\ell)B_2(M+\ell)
 ((A_1')_{\beta})^+(M-1 ) X^{\ell +M(1-\beta)   } \\
&=& 
Z((B_1')_{- \beta})\sum_{ M,\ell } 
 A_2(\ell)B_2(M+\ell)
 \big(((A_1')_{\beta})^+(M)-(A_1')_{\beta}(M)\big) X^{\ell +M(1-\beta)   } \\
&=& Z((B_1')_{- \beta})\sum_{ M,\ell } 
 A_2(\ell)B_2(M+\ell)
 \big((A_1'\cup\{-\beta\})(M)-A_1'(M)\big) X^{\ell +M   }\\
&=&  Z((B_1')_{- \beta})\big(\mathcal C (A_1'\cup A_2\cup\{-\beta\},B_2)-\mathcal C(A_1'\cup A_2,B_2)\big).
\end{eqnarray*} 
Now we consider $S_L^+$.
 We have
\begin{eqnarray*}
S_L^+ 
&=&\sum_{ M,\ell,j,k\atop d\ge M} 
 A_2(\ell)B_2(M+\ell)
 A_1'(j+d )B_1'(k+d-M )  X^{\ell+d +j(1-\alpha)+k(1-\beta)}\\&&  \qquad - 
\sum_{ M,\ell,j,k\atop d\ge M}
 A_2(\ell)B_2(M+\ell)A_1'(j+1+d )B_1'(k+1+d-M )  X^{\ell + d  +2- \alpha- \beta+j(1- \alpha)+k(1- \beta)} .
\end{eqnarray*}
This sum telescopes in $j$ and $k$. We have
\begin{eqnarray*}
S_L^+ 
&=&\sum_{ M,\ell,j\atop d\ge M} 
 A_2(\ell)B_2(M+\ell)
 A_1'(j+d )B_1'(d-M )  X^{\ell+d +j(1-\alpha) }\\&&  \qquad 
+
\sum_{ M,\ell,k\atop d\ge M} 
 A_2(\ell)B_2(M+\ell)
 A_1'(d )B_1'(k+d-M )  X^{\ell+d  +k(1-\beta)}\\&&  \qquad 
-
\sum_{ M,\ell \atop d\ge M} 
 A_2(\ell)B_2(M+\ell)
 A_1'(d )B_1'(d-M )  X^{\ell+d  }.
\end{eqnarray*}
We replace $d$ by $d+M$ and have 
\begin{eqnarray*}
S_L^+ 
&=&\sum_{ M,\ell,j,d} 
 A_2(\ell)B_2(M+\ell)
 A_1'(j+d +M)B_1'(d )  X^{\ell+d+M +j(1-\alpha) }\\&&  \qquad 
+
\sum_{ M,\ell,k,d} 
 A_2(\ell)B_2(M+\ell)
 A_1'(d+M )B_1'(k+d )  X^{\ell+d +M +k(1-\beta)}\\&&  \qquad 
-
\sum_{ M,\ell,d} 
 A_2(\ell)B_2(M+\ell)
 A_1'(d +M)B_1'(d )  X^{\ell+d+M  }.
\end{eqnarray*}
In the first term we replace $j+d$ by $r$ and sum over $d$; it becomes
\begin{eqnarray*}
\sum_{ M,\ell,r} 
 A_2(\ell)B_2(M+\ell)
 (A_1')_{-\alpha}(r +M)((B_1')_{\alpha})^+(r )  X^{\ell+r+M+M\alpha   }.
\end{eqnarray*}
 In the second term we execute the sum over $k$ as follows:
\begin{eqnarray*}
\sum_{k,d} 
 A_1'(d+M )B_1'(k+d )  X^{ d  +k(1-\beta)}&=&
 \sum_{K}(B_1')_{-\beta}(K)\sum_{d\le K}A_1'(d+M) X^{ d  +K\beta}\\
&=& X^{-M\beta}\sum_{K}(B_1')_{-\beta}(K)X^K\sum_{d\le K}(A_1')_{\beta}(d+M)  \\
&=& X^{-M\beta}\sum_{K}(B_1')_{-\beta}(K)X^K\big( ((A_1')_{\beta})^+(K+M)\\
&& \qquad \qquad -((A_1')_{\beta})^+(M-1)\big)
\end{eqnarray*}
This may be rewritten as
\begin{eqnarray*}&&
  X^{-M\beta}\sum_{K}(B_1')_{-\beta}(K)X^K ((A_1')_{\beta})^+(K+M)\\&&\qquad 
-X^{-M\beta} ((A_1')_{\beta})^+(M)Z((B_1')_{-\beta})+X^{-M\beta} (A_1')_{\beta}(M)Z((B_1')_{-\beta}).
\end{eqnarray*}
 Thus, altogether we have
\begin{eqnarray*}
S_L^+ 
&=&\sum_{ M,\ell,r} 
 A_2(\ell)B_2(M+\ell)
 (A_1')_{-\alpha}(r +M)((B_1')_{\alpha})^+(r )  X^{\ell+r+M+M\alpha   }\\&&  \qquad 
+
\sum_{ M,\ell} 
 A_2(\ell)B_2(M+\ell)\bigg(
 X^{-M\beta}\sum_{K}(B_1')_{-\beta}(K)X^K ((A_1')_{\beta})^+(K+M)\\&&\qquad 
-X^{-M\beta} ((A_1')_{\beta})^+(M)Z((B_1')_{-\beta})+X^{-M\beta} (A_1')_{\beta}(M)Z((B_1')_{-\beta})
\bigg) X^{\ell+M}\\&&  \qquad 
-
\sum_{ M,\ell,d} 
 A_2(\ell)B_2(M+\ell)
 A_1'(d +M)B_1'(d )  X^{\ell+d+M  }.
\end{eqnarray*}
In the first line notice that $(((B_1')_{\alpha})^+)_{-\alpha}=B_1'\cup\{-\alpha\}$. Also,
recall our notation:
\begin{eqnarray*}
\mathcal F (A,B;C,D)=\sum_{K,L,M}A(K)B(K+M)C(L)D(L+M)X^{K+L+M}
.
\end{eqnarray*}
Using this notation we have that 
\begin{eqnarray*}
S_L^+ 
&=& \mathcal F(B_1'\cup\{-\alpha\},A_1' ;A_2,B_2)+
\mathcal F (B_1',A_1'\cup\{-\beta\};A_2,B_2)-
\mathcal F(B_1',A_1';A_2,B_2)\\&&\qquad
-Z((B_1')_{-\beta})\mathcal C(A_1'\cup A_2\cup\{-\beta\},B_2)+Z((B_1')_{-\beta})\mathcal C(A_1'\cup A_2,B_2).
\end{eqnarray*}
We add this with our expression for $S_L^-$ and have  
\begin{eqnarray*}
S_L 
&=& \mathcal F(B_1'\cup\{-\alpha\},A_1' ;A_2,B_2)+
\mathcal F (B_1',A_1'\cup\{-\beta\};A_2,B_2)-
\mathcal F(B_1',A_1';A_2,B_2).
\end{eqnarray*} 
The expression for $S_R$ is obtained by the symmetry
 $\alpha \leftrightarrow \beta$;  
$A_1\leftrightarrow B_1$; and $A_2\leftrightarrow B_2$.   Thus,
\begin{eqnarray*}
S_R 
&=& \mathcal F(A_1'\cup\{-\beta\},B_1' ;B_2,A_2)+
\mathcal F (A_1',B_1'\cup\{-\alpha\};B_2,A_2)-
\mathcal F(A_1',B_1';B_2,A_2).
\end{eqnarray*} 
Recall that
$$\mathcal F(A,B;C,D)+\mathcal F(B,A;D,C)=\mathcal C(A\cup D,B\cup C)+\mathcal C(A,B)C(C,D).
$$
Thus, 
\begin{eqnarray*}&&
S_L+S_R=\mathcal C(A_1'\cup A_2\cup\{-\beta\},B_1'\cup B_2)+
C(A_1'\cup A_2 ,B_1'\cup B_2\cup\{-\alpha\})\\
&&\qquad -C(A_1'\cup A_2 ,B_1'\cup B_2)+\mathcal C(A_1'\cup \{-\beta\},B_1')\mathcal C(B_2,A_2)
\\
&&\qquad \qquad
+\mathcal C(A_1' ,B_1'\cup\{-\alpha\})\mathcal C(B_2,A_2)
-\mathcal C(A_1' ,B_1')\mathcal C(B_2,A_2).
\end{eqnarray*}
Adding this to $-S_0$ we have 
\begin{eqnarray*}
S_L+S_R -S_0&=& \mathcal C(A_1'\cup A_2\cup\{-\beta\},B_1'\cup B_2)+
C(A_1'\cup A_2 ,B_1'\cup B_2\cup\{-\alpha\})\\&&\qquad -C(A_1'\cup A_2 ,B_1'\cup B_2).
\end{eqnarray*}
This is equal to 
\begin{eqnarray*}
(1-X^{1-\alpha-\beta}) ~ \mathcal C(A_1'\cup A_2\cup\{-\beta\},B_1'\cup B_2\cup \{-\alpha\})
\end{eqnarray*}
as desired.

\section{Proof of Theorem 2}
We shall it convenient to recast the identity of Theorem 2 using a set-theoretic language.  
\subsection{A reformulation of the identity}
We begin with 4 sets $A,B,C$ and $D$ and 4 numbers $\alpha,\beta,\gamma$ and $\delta$.  
We consider
\begin{eqnarray*}&& \sum_{\min(M,N)=0} X^{-M(\gamma+\beta)-N(\alpha+\delta)} 
\Sigma_1(M,N)\Sigma_2(M,N)X^{M+N}
\end{eqnarray*}
where 
\begin{eqnarray*}
 \Sigma_1(M,N)&=&\sum_{d,j,k\atop q\le 1  }
  (-1)^q X^{d(\alpha+\beta)}{A_{-\alpha}}(j+q+d-\min(q+d,N))\\&&\qquad \times {B_{-\beta}}(k+q+d-\min(q+d,M))  
  X^{2q+d+j+k-\min(q+d,M)-\min(q+d,N)}
\end{eqnarray*} 
and 
\begin{eqnarray*}
 \Sigma_2(M,N)&=&\sum_{d,j,k\atop q\le 1  }
  (-1)^q X^{d(\gamma+\delta)}{C_{-\gamma}}(j+q+d-\min(q+d,M))\\&&\qquad \times {D_{-\delta}}(k+q+d-\min(q+d,N))  
  X^{2q+d+j+k-\min(q+d,M)-\min(q+d,N)}.
\end{eqnarray*} 
The problem is to express this quantity in terms of the $\mathcal C$   function, namely we want to prove that the above is 
$$=(1-X^{1-\alpha-\beta})(1-X^{1-\gamma-\delta})\mathcal C(A\cup C\cup\{-\beta,-\delta\},
B\cup D \cup \{-\alpha,-\gamma\}).$$

\subsection{Initial reductions}

We can decompose the sum over $M$ and $N$ via
 $$\sum_{\min(M,N)=0}f(M,N)=\sum_{M=0}^\infty f(M,0)+\sum_{N=0}^\infty f(0,N)-f(0,0).$$
Thus, 
the sum above is $S_L+S_R-S_0$ where 
\begin{eqnarray*}
S_L= \sum_{M=0}^\infty 
X^{-M(\gamma+\beta) } 
\Sigma_1(M,0)\Sigma_2(M,0)X^{M}
\end{eqnarray*}
\begin{eqnarray*}&&S_R=\sum_{N=0}^\infty 
 X^{ -N(\alpha+\delta)} 
\Sigma_1(0,N)\Sigma_2(0,N)X^{N} 
\end{eqnarray*}
and
 \begin{eqnarray*}
S_0= \Sigma_1(0,0)\Sigma_2(0,0).
\end{eqnarray*} 
 We have
\begin{eqnarray*}&&
S_0=
 \bigg(\sum_{d,j,k\atop q\le 1  }
  (-1)^q X^{d(\alpha+\beta)}{A_{-\alpha}}(j+q+d )  {B_{-\beta}}(k+q+d )  
  X^{2q+d+j+k} \bigg)\\&&\qquad \times 
\bigg(
 \sum_{d,j,k\atop q\le 1  }
  (-1)^q X^{d(\gamma+\delta)}{C_{-\gamma}}(j+q+d ) {D_{-\delta}}(k+q+d )  
  X^{2q+d+j+k }\bigg).
\end{eqnarray*}
The first factor here is 
\begin{eqnarray*}&&
\sum_{d,j,k  }
    X^{d(\alpha+\beta)}{A_{-\alpha}}(j+d )  {B_{-\beta}}(k+d )  
  X^{d+j+k}\\&&\qquad 
-
\sum_{d,j,k   }
    X^{d(\alpha+\beta)}{A_{-\alpha}}(j+1+d )  {B_{-\beta}}(k+1+d )  
  X^{2+d+j+k}
\end{eqnarray*}
which telescopes in $j$ and $k$. Thus, it is 
\begin{eqnarray*}&&
\sum_{d,j }
    X^{d(\alpha+\beta)}{A_{-\alpha}}(j+d )  {B_{-\beta}}(d )  
  X^{d+j}
+\sum_{d,k  }
    X^{d(\alpha+\beta)}{A_{-\alpha}}(d )  {B_{-\beta}}(k+d )  
  X^{d+k}\\
&&\qquad 
-\sum_{d  }
    X^{d(\alpha+\beta)}{A_{-\alpha}}(d )  {B_{-\beta}}(d )  
  X^{d}.
\end{eqnarray*}
The first term here is 
\begin{eqnarray*}&&
\sum_{d,j }
    X^{d(\alpha+\beta)}{A_{-\alpha}}(j+d )  {B_{-\beta}}(d )  
  X^{d+j}
=\sum_{J=0}^\infty {A_{-\alpha}}(J )X^J \sum_{d\le J}  {B_{-\beta}}(d ) X^{d(\alpha+\beta)}.
\end{eqnarray*}  
Now 
\begin{eqnarray*}
{B_{-\beta}}(d ) X^{d(\alpha+\beta)}={B_{\alpha}}(d )
\end{eqnarray*}
and 
\begin{eqnarray*}
\sum_{d\le J}{B_{\alpha}}(d )
=(B_{\alpha})^+(J ).
 \end{eqnarray*}
Thus, the above is 
\begin{eqnarray*}
\sum_{J=0}^\infty 
{A_{-\alpha}}(J )(B_{\alpha})^+(J )X^J
=\mathcal C(A_{-\alpha},(B_{\alpha})^+)=\mathcal C(A ,B\cup\{-\alpha\}).
\end{eqnarray*}
The second term is 
\begin{eqnarray*}
\mathcal C((A_{\beta})^+,B_{-\beta})=\mathcal C(A\cup\{-\beta\},B )
\end{eqnarray*}
and the third term is 
\begin{eqnarray*}
\mathcal C(A ,B)
.
\end{eqnarray*}
We can do the same with the second factor. The net result is that  
\begin{eqnarray*}
S_0&=&\bigg(\mathcal C(A ,B\cup\{-\alpha\}) +
\mathcal C(A\cup \{-\beta\},B )-\mathcal C(A,B)\bigg)\\&&
\qquad \times 
\bigg(\mathcal C(C ,D\cup\{-\gamma\}) +
\mathcal C(C\cup\{-\delta\},D )-\mathcal C(C,D)\bigg).
\end{eqnarray*}

Next we analyze $S_L$. First consider $\Sigma_1(M,0)$:
\begin{eqnarray*}&&
 \Sigma_1(M,0)=\sum_{d,j,k  }
    X^{d(\alpha+\beta)}{A_{-\alpha}}(j+d ) {B_{-\beta}}(k+d-\min(d,M))  
  X^{d+j+k-\min(d,M) }\\ && \qquad 
-
\sum_{d,j,k  }
    X^{d(\alpha+\beta)}{A_{-\alpha}}(j+1+d ) {B_{-\beta}}(k+1+d-\min(1+d,M))  
  X^{2+d+j+k-\min(1+d,M) }.
\end{eqnarray*} 
We split this into the terms with $d< M$ and those with $d\ge M$. We have
\begin{eqnarray*}
\Sigma_1^-(M,0)
&=&\sum_{j,k\atop 
d< M  }
    X^{d(\alpha+\beta)}{A_{-\alpha}}(j+d ) {B_{-\beta}}(k )  
  X^{j+k }\\ && \qquad 
-
\sum_{j,k\atop d < M  }
    X^{d(\alpha+\beta)}{A_{-\alpha}}(j+1+d ) {B_{-\beta}}(k )  
  X^{1+j+k  }
\\&=& Z(B_{-\beta})\bigg(
\sum_{j\atop 
d < M  }
    X^{d(\alpha+\beta)}{A_{-\alpha}}(j+d )  
  X^{j } 
-
\sum_{j\atop d < M  }
    X^{d(\alpha+\beta)}{A_{-\alpha}}(j+1+d )  
  X^{1+j }\bigg).
\end{eqnarray*}
The sum over $j$ telescopes so that this is 
\begin{eqnarray*}
\Sigma_1^-(M,0)
&=& Z(B_{-\beta})
\sum_{ 
d < M  }
    X^{d(\alpha+\beta)}{A_{-\alpha}}(d ) \\
&=& Z(B_{-\beta})
\sum_{ 
d < M  }
     {A_{\beta}}(d ) =  Z(B_{-\beta})(A_{\beta})^+(M-1).
\end{eqnarray*}

Next we consider 
\begin{eqnarray*}
\Sigma_1^+(M)&=&
 \sum_{j,k\atop d \ge M  }
    X^{d(\alpha+\beta)}{A_{-\alpha}}(j+d ) {B_{-\beta}}(k+d-M)  
  X^{d+j+k-M }\\ && \qquad 
-
\sum_{j,k\atop d \ge M  }
    X^{d(\alpha+\beta)}{A_{-\alpha}}(j+1+d ) {B_{-\beta}}(k+1+d-M)  
  X^{2+d+j+k-M }.
\end{eqnarray*}
We replace $d$ by $d+M$ and have 
\begin{eqnarray*}
\Sigma_1^+(M)&=&
 \sum_{j,k,d  }
    X^{(d+M)(\alpha+\beta)}{A_{-\alpha}}(j+d+M ) {B_{-\beta}}(k+d)  
  X^{d+j+k }\\ && \qquad 
-
\sum_{j,k,d  }
    X^{(d+M)(\alpha+\beta)}{A_{-\alpha}}(j+1+d+M ) {B_{-\beta}}(k+1+d)  
  X^{2+d+j+k }.
\end{eqnarray*}
Now the sum over $j$ and $k$ telescopes and we have 
\begin{eqnarray*}
\Sigma_1^+(M)&=&
 \sum_{j,d  }
    X^{(d+M)(\alpha+\beta)}{A_{-\alpha}}(j+d+M ) {B_{-\beta}}(d)  
  X^{d+j }\\ && \qquad 
+ \sum_{k,d  }
    X^{(d+M)(\alpha+\beta)}{A_{-\alpha}}(d+M ) {B_{-\beta}}(k+d)  
  X^{d+k }\\ && \qquad 
- \sum_{d  }
    X^{(d+M)(\alpha+\beta)}{A_{-\alpha}}(d+M ) {B_{-\beta}}(d)  
  X^{d }\\ && \qquad 
\end{eqnarray*}
 
We recognize a convolution in the first term and rewrite this as
\begin{eqnarray*}
\Sigma_1^+(M)&=&
 \sum_{r }
    X^{M(\alpha+\beta)}{A_{-\alpha}}(r+M ) (B_{\alpha})^+(r)  
  X^{r }\\ && \qquad 
+ \sum_{k,d  }
    X^{(d+M)(\alpha+\beta)}{A_{-\alpha}}(d+M ) {B_{-\beta}}(k+d)  
  X^{d+k }\\ && \qquad 
- \sum_{d  }
    X^{(d+M)(\alpha+\beta)}{A_{-\alpha}}(d+M ) {B_{-\beta}}(d)  
  X^{d }\\ && \qquad 
\end{eqnarray*}
 The middle term here may be written as   
\begin{eqnarray*}
  \sum_{K  }
     {B_{-\beta}}(K)  
  X^{K } \sum_{d\le K}A_{\beta}(d+M )&=& \sum_{K  }
     {B_{-\beta}}(K)  
  X^{K } \left((A_{\beta})^+(K+M )-(A_{\beta})^+(M-1 )\right)\\
&=& \sum_K  {B_{-\beta}}(K)  
  (A_{\beta})^+(K+M )X^K-Z(B_{-\beta})(A_{\beta})^+(M-1 ).
\end{eqnarray*}
The second term of this cancels with $\Sigma_1^-(M,0)$
and so we have 
\begin{eqnarray*}
\Sigma_1(M,0)&=&
    X^{M(\alpha+\beta)}\sum_{K }(B_{\alpha})^+(K) {A_{-\alpha}}(K+M )  
  X^{K }
- X^{M(\alpha+\beta)}\sum_{K }B_{\alpha}(K) {A_{-\alpha}}(K+M )  
  X^{K }\\
&&\qquad 
+\sum_K  {B_{-\beta}}(K)  
  (A_{\beta})^+(K+M )X^K.
\end{eqnarray*}
This may be rewritten as 
\begin{eqnarray*}
\Sigma_1(M,0)&=&X^{M\beta} \bigg(
     \sum_{K }(B\cup\{-\alpha\})(K) A (K+M )  
  X^{K }
-  \sum_{K }B (K) {A }(K+M )  
  X^{K }\\
&&\qquad 
+\sum_K  {B }(K)  
  (A\cup\{-\beta\})(K+M )X^K\bigg)
\end{eqnarray*}
By symmetry
\begin{eqnarray*}
\Sigma_2(M,0)&=&X^{M\gamma}\bigg(
    \sum_{L }(C\cup\{-\delta\})(L) D (L+M )  
  X^{L }
-  \sum_{L }C (L)D(L+M )  
  X^{L }\\
&&\qquad +\sum_L C(L)  
  (D\cup\{-\gamma\})(L+M )X^L\bigg).
\end{eqnarray*}
 Recall that we are trying to evaluate
\begin{eqnarray*}
S_L=\sum_{M}X^{M(1-\gamma-\beta) }  \Sigma_1(M,0)\times \Sigma_2(M,0). 
\end{eqnarray*}
If we multiply out the three terms of $\Sigma_1$ by the three terms of $\Sigma_2$ and then sum over $M$ 
we get a total of nine expressions the first of which is
\begin{eqnarray*}&&
\sum_{K,L,M}(B\cup\{-\alpha\})(K) A (K+M )  (C\cup\{-\delta\})(L) D (L+M )  X^{K+L+M}\\
&&\qquad 
=\mathcal F(B\cup\{-\alpha\},A; C\cup\{-\delta\}, D).
\end{eqnarray*}
Thus we now see $S_L$ as a sum of nine terms of $\mathcal F$ at  different arguments
which we encapsulate in the following  table for $S_L$:
\begin{eqnarray*}
\begin{array}{cccccc}
\#&\mbox{sign}&K&K+M&L&L+M\\
\hline
1 &+& B\cup\{-\alpha\}&A& C\cup\{-\delta\}& D\\
2 &-& B\cup\{-\alpha\}&A& C & D\\
3 &+& B\cup\{-\alpha\}&A& C & D\cup\{-\gamma\}\\
4 &-& B &A& C\cup\{-\delta\}& D\\
5 &+& B &A& C & D\\
6 &-& B &A& C & D\cup\{-\gamma\}\\
7 &+& B &A\cup\{-\beta\}& C\cup\{-\delta\}& D\\
8 &-& B &A\cup\{-\beta\}& C & D\\
9 &+& B &A\cup\{-\beta\}& C & D\cup\{-\gamma\}\\
\end{array}
\end{eqnarray*}
Note that $S_R$ is just the same as $S_L$ but with $\alpha \leftrightarrow \beta$; $\gamma\leftrightarrow \delta$;
$A\leftrightarrow B$; and $C\leftrightarrow D$. Thus, we have the table for $S_R$:
\begin{eqnarray*}
\begin{array}{cccccc}
\#&\mbox{sign}&K&K+M&L&L+M\\
\hline
1 &+& A\cup\{-\beta\}&B& D\cup\{-\gamma\}& C\\
2 &-& A\cup\{-\beta\}&B& D & C\\
3 &+& A\cup\{-\beta\}&B& D & C\cup\{-\delta\}\\
4 &-& A &B& D\cup\{-\gamma\}& C\\
5 &+& A &B& D & C\\
6 &-& A &B& D & C\cup\{-\delta\}\\
7 &+& A &B\cup\{-\alpha\}& D\cup\{-\gamma\}& C\\
8 &-& A &B\cup\{-\alpha\}& D & C\\
9 &+& A &B\cup\{-\alpha\}& D & C\cup\{-\delta\}\\
\end{array}
\end{eqnarray*}
Now we pair up line $x$ from $S_L$ with line $10-x$ from $S_R$ and we use the   lemma
to express the sum of the $\mathcal F$-terms as $\mathcal C$'s. 
We have
\begin{eqnarray*}S_L+S_R&=&
\mathcal C( A\cup C \cup\{-\delta\},B\cup D\cup\{-\alpha\})
+\mathcal C( A\cup\{-\beta\}, B)
~ \mathcal C( D\cup\{-\gamma\}, C) \\&&\qquad
- \mathcal C( A\cup C  ,B\cup D\cup\{-\alpha\})- \mathcal C( A\cup\{-\beta\}, B)
~\mathcal C( D , C)\\&&\qquad
+\mathcal C( A\cup C  ,B\cup D\cup\{-\alpha,-\gamma\})
+\mathcal C( A\cup\{-\beta\}, B) ~\mathcal C( D , C\cup\{-\delta\})\\&&\qquad
 - \mathcal C( A\cup C \cup\{-\delta\},B\cup D )
- \mathcal C( A , B) ~\mathcal C( D\cup\{-\gamma\}, C)\\&&\qquad
+\mathcal C( A\cup C ,B\cup D )+ \mathcal C( A, B)~\mathcal C( D , C)\\&&\qquad 
- \mathcal C( A\cup C  ,B\cup D\cup\{-\gamma\})
- \mathcal C(A , B) ~\mathcal C( D ,  C\cup\{-\delta\})\\&&\qquad
+\mathcal C( A\cup C \cup\{-\beta,-\delta\},B\cup D )+
\mathcal C( A , B\cup\{-\alpha\}) ~\mathcal C( D\cup\{-\gamma\}, C)\\&&\qquad 
-\mathcal C( A\cup C \cup\{-\beta\},B\cup D )-
\mathcal C ( A , B\cup\{-\alpha\})~\mathcal C( D , C)\\&&\qquad
+ \mathcal C( A\cup C \cup\{-\beta\},B\cup D\cup\{-\gamma\})
+ \mathcal C( A , B\cup\{-\alpha\})~\mathcal C( D , C\cup\{-\delta\}).
\end{eqnarray*}
When we subtract $S_0$ all of the terms that are products of two $\mathcal C$s cancel:
\begin{eqnarray*}S_L+S_R-S_0&=&
\mathcal C( A\cup C \cup\{-\delta\},B\cup D\cup\{-\alpha\})
- \mathcal C( A\cup C  ,B\cup D\cup\{-\alpha\}) \\&&\qquad
+\mathcal C( A\cup C  ,B\cup D\cup\{-\alpha,-\gamma\})
 - \mathcal C( A\cup C \cup\{-\delta\},B\cup D )
 \\&&\qquad
+\mathcal C( A\cup C ,B\cup D ) 
- \mathcal C( A\cup C  ,B\cup D\cup\{-\gamma\})
 \\&&\qquad
+\mathcal C( A\cup C \cup\{-\beta,-\delta\},B\cup D )  
-\mathcal C( A\cup C \cup\{-\beta\},B\cup D ) \\&&\qquad
+ \mathcal C( A\cup C \cup\{-\beta\},B\cup D\cup\{-\gamma\}).
\end{eqnarray*}

 \subsection{The final reckoning}
A generalization of 
$$A^+(d)=A(d)+A^+(d-1)$$
is 
$$ (A\cup\{-\alpha\})(d-1) = X^{\alpha} \bigg( (A\cup\{-\alpha\})(d) - A(d)\bigg).$$
We apply this to the expression  
\begin{eqnarray*}&&
(1-X^{1-\alpha-\beta})(1-X^{1-\gamma-\delta}) \sum_{r=0}^\infty(A\cup C \cup \{-\beta,-\delta\})(r)
(B\cup D \cup \{-\alpha,-\gamma\})(r)X^r 
\end{eqnarray*}
and after some work find that it is equal to the expression above for 
$S_L+S_R-S_0$.

\end{document}